\newtheorem{theorem}{Theorem}[section]
\newtheorem*{theorem*}{Theorem}
\newtheorem{proposition}[theorem]{Proposition}
\newtheorem{corollary}[theorem]{Corollary}
\newtheorem{lemma}[theorem]{Lemma}
\theoremstyle{definition}
\newtheorem{definition}[theorem]{Definition}
\newtheorem{example}[theorem]{Example}
\theoremstyle{remark}
\newtheorem{remark}[theorem]{Remark}
\numberwithin{equation}{section}
\begin{document}

\title[Algebrability of the set of hypercyclic vectors]{Algebrability of the set of hypercyclic vectors for backward shift operators}

\thanks{The authors are grateful to Prof. Manuel Maestre for a very helpful discussion.\\
\indent The authors were supported by the Fonds de la Recherche Scientifique - FNRS, grant no. PDR T.0164.16}

\subjclass[2010]{Primary 47A16; Secondary 47B37}  

\keywords{Hypercyclic vector, weighted shift, Fr\'echet algebra, algebrability}

\author[J. Falc\'{o}]{Javier Falc\'{o}}
\address[Javier Falc\'{o}]{D\'epartement de Math\'ematique, Institut Complexys,
Universit\'e de Mons, 20 Place du Parc, 7000 Mons, Belgium} 
\curraddr{Departamento de An\'alisis Matem\'atico, Universidad de Valencia, Doctor Moliner 50, 46100 Burjasot (Valencia),
Spain.}
\email{Francisco.J.Falco@uv.es}
\author[K.-G. Grosse-Erdmann]{Karl-G. Grosse-Erdmann}
\address[Karl-G. Grosse-Erdmann]{D\'epartement de Math\'ematique, Institut Complexys,
Universit\'e de Mons, 20 Place du Parc, 7000 Mons, Belgium} \email{kg.grosse-erdmann@umons.ac.be}
%


\begin{abstract}
	
We study the existence of algebras of hypercyclic vectors for weight\-ed backward shifts on Fr\'echet sequence spaces that are algebras when endowed with coordinatewise multiplication or with the Cauchy product. As a particular case we obtain that the sets of hypercyclic vectors for Rolewicz's and MacLane's operators are algebrable.
	
\end{abstract}

\maketitle

One of the aims of linear dynamics is to study and understand the structure and the properties of the set of hypercyclic vectors of an operator $T$ on a Fr\'echet space $X$,
$$ 
HC(T) = \big\{x \in X : \{x, T x, T^{2} x, \ldots \} \text{ is dense in }X\big\}.
$$ 
It is well known that the set $HC(T)$ is either empty or contains a dense linear subspace (but the origin), see \cite[Theorem 2.55]{GrPe11}. However, when the underlying vector space $X$ possesses a richer structure it is natural to ask whether the set of hypercyclic vectors for a given hypercyclic operator $T$ on $X$ also has a richer structure in the same spirit. For instance, the Fr\'echet space $H(\mathbb C)$ of entire functions can be naturally endowed with the multiplicative structure given by the pointwise multiplication of functions, which leads to an algebraic structure of the space. In fact, the space $H(\mathbb C)$ endowed with pointwise multiplication is a Fr\'echet algebra. 

When we have a hypercyclic operator $T$ on a Fr\'echet algebra $X$ it is natural to ask if $HC(T)$ contains a non-trivial subalgebra  of $X$ (except  zero). When such a subalgebra exists it is called a \textit{hypercyclic algebra} for $T$. If a hypercyclic algebra is infinitely but not finitely generated then $HC(T)$ is said to be \textit{algebrable}; see the monograph \cite{ABPS16} for this and related notions.

Aron et al. \cite{Aron07} showed that not every hypercyclic operator on a Fr\'echet algebra contains a hypercyclic algebra. Indeed, no translation operator $\tau_{a}$ on $H(\mathbb C)$, 
$$
\tau_{a} (f)(z) = f (z + a), \quad f \in  H(\mathbb C),\ z \in \mathbb C,\  a\neq 0,
$$
can support a hypercyclic algebra; these operators are also called Birkhoff's operators in linear dynamics, see \cite{GrPe11}. In contrast, in the same paper it is shown that there exists a function $f\in H(\mathbb C)$ such that all the powers of $f$ are in $HC(D)$, where
\[
(Df)(z)=f'(z), \quad f \in  H(\mathbb C),\ z \in \mathbb C,
\]
defines the complex differentiation operator, also called MacLane's operator. This result gave hope for the existence of hypercyclic algebras. Shortly afterwards,  Shkarin \cite{Sh10} and Bayart and Matheron \cite[Theorem 8.26]{EtMAt} showed independently that $D$ admits a hypercyclic algebra, thereby providing the first known example of an operator that admits a hypercyclic algebra. While the proof of Shkarin is purely constructive the approach used by Bayart and Matheron makes use of a Baire argument. Using the ideas of Bayart and Matheron, B\`{e}s, Conejero and Papathanasiou \cite{BesConPap} extended the result to convolution operators $P(D)$ induced by non-constant polynomials $P$ that vanish at zero. In \cite{BesConPap2}, using a different method, they obtained a further extension to convolution operators $\Phi(D)$ for various entire functions $\Phi$ of exponential type, including functions that do not vanish at zero.

Here we improve the result of Shkarin, Bayart and Matheron in two ways. First, we consider general weighted backward shift operators on Fr\'echet sequence algebras, where the multiplicative structure can be given either by coordinatewise multiplication or by the Cauchy product (that is, the discrete convolution). The importance of this scenario is that many operators can be seen as weighted backward shifts on a Fr\'echet sequence space. Our results therefore cover the multiples $\lambda B$, $|\lambda|>1$, of the backward shift operator on $\ell^p$-spaces and on $c_0$, also called Rolewicz's operators, as well as MacLane's operator $D$ on $H(\mathbb{C})$. And secondly, we obtain in each case that the set of hypercyclic vectors is algebrable. For MacLane's operator this gives a positive answer to a question posed by Aron \cite[p. 217]{EtMAt}.

Aron's problem has already been solved recently by B\`es and Papathanasiou \cite{BesPap}, using Baire's theorem; they even obtain dense algebrability. Our solution was obtained independently, and its proof is purely constructive. In fact, the two papers obtain far-reaching generalizations in different directions: B\`es and Papathanasiou regard $D$ as a particular convolution operator, we look at it as a particular weighted backward shift.

In the first section we establish some notation and terminology that we will use during the rest of the paper. In Sections \ref{pointwise} and \ref{cauchy} we give sufficient conditions for a weighted backward shift on a Fr\'echet sequence algebra to contain a hypercyclic algebra under coordinatewise multiplication and under Cauchy products, respectively.

\section{Notation and terminology}\label{sec1}

We consider a complex $m$-convex Fr\'echet algebra, that is an algebra $X$ over the complex numbers that at the same time is a (locally convex) Fr\'echet space whose topology is induced by an increasing sequence $(\|\cdot\|_q)_{q\geq 1}$ of seminorms 
that are submultiplicative, i.e.,
\begin{equation}\label{eq:subm}
\|xy\|_q\leq \|x\|_q \|y\|_q
\end{equation}
for all $x,y\in X$, $q\geq 1$. For brevity we will call $X=(X, (\|\cdot\|_q)_q)$ simply a \textit{Fr\'echet algebra}; see \cite{Fra}.

The space of all complex sequences is denoted, as usual, by 
$$
\omega=\{x=(x_{n})_{n\geq 0}:x_{n}\in\mathbb C, n\in\mathbb N_0\}.
$$
We endow $\omega$ with the product topology, that is, the topology of coordinatewise convergence. 

A \textit{sequence space} is a subspace of $\omega$. As for a multiplicative structure one may endow $\omega$ either with the
coordinatewise product of sequences, see Section \ref{pointwise}, or with the Cauchy product of sequences, see Section \ref{cauchy}. A \textit{sequence algebra} is a subalgebra of $\omega$ in either of the two senses. 

The sequence $e_n$, $n\geq 0$, is defined as $e_n=(0,\ldots,0,1,0,\ldots)$ with 1 at index $n$. Furthermore, we write $e=(1,1,1\ldots)$.

We denote by 
$$
\varphi=\Big\{\sum_{n=0}^{N}x_{n}e_{n}:x_0,\ldots,x_N\in\mathbb{C}, N\in\mathbb{N}_0\Big\}
$$ 
the set of all \textit{finite sequences}.

When a sequence space, respectively sequence algebra, $X$ carries the additional structure of a Fr\'echet space, resp. Fr\'echet algebra, such that the canonical embedding into $\omega$ is continuous we speak of a \textit{Fr\'echet sequence space}, resp. \textit{Fr\'echet sequence algebra}. 

A weighted backward shift on $\omega$ is an operator $B_{w}$ given by 
$$
B_{w}(x_{0}, x_{1}, x_{2}, \ldots) = (w_{1}x_{1}, w_{2}x_{2}, w_{3}x_{3}, \ldots),\quad x\in\omega,
$$
where $w = (w_{n})_{n\geq 0}$ is a sequence of non-zero complex numbers, called a \textit{weight sequence}. The unweighted shift is denoted by $B=B_e$. The forward shift associated to a weight $w$ is the operator given by 
$$
F_{w}(x_{0}, x_{1}, x_{2}, \ldots) = (0, w_{1}x_{0}, w_{2}x_{1}, w_{3}x_{2}, \ldots),\quad x\in\omega.
$$
Naturally we have that  $B_{w}F_{w^{-1}}=I$, where $I$ is the identity map on $\omega$ and $w^{-1}=(w_n^{-1})_n$. Since the element $w_{0}$ is not relevant for the definition of the operators $B_{w}$ and $F_{w}$ we will assume that $w_{0}=1$ for any weight $w$.

Throughout the paper we will write, for a given weight $w=(w_n)_n$,
\[
v_{n}=\prod_{k=0}^{n}w_{k}, \quad n\geq 0.
\]

Note that the closed graph theorem implies that as soon as $B_w$ or $F_w$ maps a Fr\'echet sequence space $X$ into itself then it defines a (continuous, linear) operator on $X$.

Apart from the notion of hypercyclicity we will need the stronger property of mixing. An operator $T$ on a separable Fr\'echet space $X$ is called \textit{mixing} if, for any non-empty open subsets $U,V$ of $X$, the set $\{n\geq 0: T^n(U)\cap V\neq \varnothing\}$ is co-finite. 

For monographs on linear dynamics we refer to \cite{EtMAt} and \cite{GrPe11}.


\section{Fr\'echet sequence algebras under coordinatewise multiplication}
\label{pointwise}

In this section we study algebrability of the set of hypercyclic vectors by considering dynamical systems where the underlying space $X$ is a Fr\'echet sequence algebra and the multiplicative structure is the coordinatewise multiplication of sequences. So given two sequences  $x=(x_n)_{n}$ and  $y=(y_n)_{n}$ in $X$ we define  $xy = (x_ny_n)_{n}$. We will assume that the weighted backward shift $B_w$ is an operator on~$X$.

To start, it will be useful to consider the following variant of the well-known characterization of hypercyclicity of weighted backward shifts, see \cite[Theorem 4.8]{GrPe11}.

\begin{proposition} 
\label{prop:increasingsequence}
Let $X$ be a Fr\'echet sequence space in which $(e_{n})_{n}$ is a basis. Suppose that the weighted backward shift $B_{w}$ is an operator on $X$. Then $B_{w}$ is hypercyclic if and only if there exists an increasing sequence $(p_{k})_{k}$ of natural numbers such that
\begin{equation}\label{eq1}
\text{for each $n\geq 0$,}\quad v_{p_{k}+n}^{-1}e_{p_{k}+n}\to 0
\end{equation}
in $X$ as $k\to \infty$.
\end{proposition}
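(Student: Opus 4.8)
The plan is to prove the two implications separately, using the Hypercyclicity Criterion for the ``if'' direction and a single hypercyclic vector together with a diagonal extraction for the ``only if'' direction. Throughout I will rely on the elementary identities $B_w^{j}e_i=\frac{v_i}{v_{i-j}}e_{i-j}$ for $i\geq j$ (and $0$ otherwise), $F_{w^{-1}}^{j}e_n=\frac{v_n}{v_{n+j}}e_{n+j}$, and $B_wF_{w^{-1}}=I$ on $\omega$, which translate the abstract hypercyclicity conditions into statements about the vectors $v_{p_k+n}^{-1}e_{p_k+n}$. I will also use that, since $(e_n)_n$ is a basis of the Fr\'echet sequence space $X$, the coordinate functionals $x\mapsto x_n$ are continuous and the expansion $x=\sum_n x_ne_n$ converges, so in particular $x_ie_i\to 0$ as $i\to\infty$.

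For the sufficiency I would apply the Hypercyclicity Criterion along the subsequence $(p_k)_k$, with dense set $X_0=\varphi$ and the right inverse $S=F_{w^{-1}}$. For $x\in\varphi$ the term $B_w^{p_k}x$ vanishes once $p_k$ exceeds the (finite) support of $x$, and $B_w^{p_k}F_{w^{-1}}^{p_k}x=x$ because $B_wF_{w^{-1}}=I$; the only nontrivial requirement, $F_{w^{-1}}^{p_k}x\to 0$, reduces by linearity to the basis vectors, where $F_{w^{-1}}^{p_k}e_n=v_n\,v_{p_k+n}^{-1}e_{p_k+n}\to 0$ is exactly hypothesis \eqref{eq1}. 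Hence $B_w$ is hypercyclic.

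For the necessity, fix a hypercyclic vector $x$ and a bound $N$, and consider the target $y=\sum_{n=0}^{N}e_n$. Since the orbit of $x$ meets every neighbourhood of $y$ at arbitrarily large times, I can choose $j_k\to\infty$ with $B_w^{j_k}x\to y$. Reading off coordinates $0\leq m\leq N$ via the continuous coordinate functionals gives $x_{m+j_k}\,v_{m+j_k}/v_m\to 1$, and solving for the quantity of interest yields
\[
v_{m+j_k}^{-1}e_{m+j_k}=\frac{1}{a_k v_m}\,\big(x_{m+j_k}e_{m+j_k}\big),\qquad a_k:=x_{m+j_k}\tfrac{v_{m+j_k}}{v_m}\to 1,
\]
whose right-hand side tends to $0$ because $a_k\to 1$, $v_m$ is a fixed nonzero scalar, and $x_{m+j_k}e_{m+j_k}\to 0$ as a tail of the basis expansion of $x$. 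Thus for each fixed $N$ there is a sequence along which $v_{\cdot+n}^{-1}e_{\cdot+n}\to 0$ simultaneously for all $n\leq N$.

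The main obstacle is the final bookkeeping: producing one increasing sequence $(p_k)_k$ that works for every $n$ at once and in every seminorm. I would arrange this as a diagonal extraction: using the previous step, for each $k$ pick $p_k>p_{k-1}$ so large that $\|v_{p_k+n}^{-1}e_{p_k+n}\|_{k}<1/k$ for all $0\leq n\leq k$; since the seminorms $(\|\cdot\|_q)_q$ are increasing, this single bound controls $\|\cdot\|_q$ for every $q\leq k$, so that for fixed $n$ and $q$ the tail $k\geq\max(n,q,1/\varepsilon)$ yields convergence to $0$. The one point that needs justification in the step above is that the return times may be taken arbitrarily large, i.e.\ that $\{j:B_w^{j}x\in V\}$ is unbounded for every nonempty open $V$; this follows from the standard fact that each iterate $B_w^{J}x$ of a hypercyclic vector is again hypercyclic, so its orbit already meets $V$.
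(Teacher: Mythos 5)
Your proof is correct, but it takes a more self-contained route than the paper. The paper simply quotes the known characterization of hypercyclicity of $B_w$ (\cite[Theorem 4.8]{GrPe11}: existence of $(m_k)_k$ with $v_{m_k}^{-1}e_{m_k}\to 0$), observes that condition \eqref{eq1} at $n=0$ gives sufficiency, and for necessity applies $B_w^n$ to get $v_{m_k-n}^{-1}e_{m_k-n}\to 0$ and then invokes the re-indexing result \cite[Lemma 4.2]{GrPe11} to convert the indices $m_k-n$ into $p_k+n$. You instead re-derive the sufficiency from scratch via the Hypercyclicity Criterion with $S=F_{w^{-1}}$ on $\varphi$ (which is exactly how \cite[Theorem 4.8]{GrPe11} is proved, so nothing is gained there), and for the necessity you bypass both citations: approximating the target $\sum_{n=0}^N e_n$ along large return times, reading off coordinates to get $\tfrac{v_{m+j_k}}{v_m}x_{m+j_k}\to 1$, and using that $x_ie_i\to 0$ for the basis expansion of the hypercyclic vector $x$, followed by a diagonal extraction over $N$, the seminorm index and the accuracy. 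All steps check out (the identity $v_{m+j_k}^{-1}e_{m+j_k}=\tfrac{1}{a_kv_m}x_{m+j_k}e_{m+j_k}$ is valid since $a_k\to 1$ forces $x_{m+j_k}\neq 0$ eventually, and the unboundedness of return times follows from hypercyclicity of iterates). What your argument buys is independence from \cite[Lemma 4.2]{GrPe11} and a concrete explanation of why the condition holds at a whole block of consecutive indices $p_k,\dots,p_k+N$, namely because the orbit approximates a target supported on $\{0,\dots,N\}$; what it costs is length, since the paper's three-line proof delegates all the work to two standard references.
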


\begin{proof}
We have that $B_w$ is hypercyclic if and only if there exists an increasing sequence $(m_{k})_{k}$ of natural numbers such that
\begin{equation}\label{eq2}
v_{m_{k}}^{-1}e_{m_{k}}\to 0
\end{equation}
in $X$ as $k\to \infty$, see \cite[Theorem 4.8]{GrPe11}. Thus condition \eqref{eq1} is sufficient. 

For the necessity, suppose that \eqref{eq2} holds. It follows from continuity of $B_w$ and the fact that $B_we_k= w_ke_{k-1}$, $k\geq 1$, that
\[
v_{m_{k}-n}^{-1}e_{m_{k}-n}= B_w^n v_{m_{k}}^{-1}e_{m_{k}} \to 0
\]
as $k\to\infty$, for any $n\geq 0$. By \cite[Lemma 4.2]{GrPe11} there is an increasing sequence $(p_k)_k$ of natural numbers such that
\[
v_{p_{k}+n}^{-1}e_{p_{k}+n} \to 0
\]  
as $k\to\infty$, for any $n\geq 0$, which had to be shown. 
\end{proof}

\begin{definition}\label{def:propertyCS}
Let $(X, (\|\cdot\|_q)_q)$ be a Fr\'echet sequence space that contains the finite sequences. We say that $(e_n)_n$ has \textit{Property A} if, for any $r\geq 1$, there is some $q\geq 1$ and some $C>0$ such that, for all $n\geq 0$,
\begin{equation}\label{eq3}
\|e_n\|_r^2\leq C \|e_n\|_q.
\end{equation}
\end{definition}

This is less of a restriction than it might at first appear.

\begin{example}	\label{propA}
(a) If $(e_n)_n$ is bounded in the space $X$ then it has Property A; simply consider $q=r$. In particular, the classical sequence spaces $\ell^p$, $1\leq p<\infty$, and $c_0$ are Banach sequence algebras under their usual norms and coordinatewise multiplication (\eqref{eq:subm} is easily verified) for which their bases $(e_n)_n$ have Property A. 

(b) The space $H(\mathbb{C})$ of entire functions can be considered as a sequence space via Taylor coefficients at 0. Its natural topology of uniform convergence on compact sets can be induced by 
the seminorms
\[
\|(a_n)_{n\geq 0}\|_q = \sum_{n=0}^\infty |a_n|q^n,\quad q\geq 1.
\]
This turns $H(\mathbb{C})$ into a Fr\'echet sequence algebra under coordinatewise multiplication of the sequences (\eqref{eq:subm} is easily verified). Moreover, its basis $(e_n)_{n}$ has Property A since $\|e_n\|_r^2= \|e_n\|_{r^2}$ for $n\geq 0$.

(c) The product topology of the space $\omega$ of all sequences is generated by the increasing sequence of seminorms
$$
\|x\|_q = \sup_{0\leq n\leq q} \vert x_{n}\vert,\quad q\geq 1.
$$ 
Then $\omega$ is a Fr\'echet sequence algebra under coordinatewise multiplication whose basis $(e_n)_{n}$ has Property A.
\end{example}

We will need the following improvement of Property A.

\begin{lemma}\label{lem:propertyCS}
Let $(X, (\|\cdot\|_q)_q)$ be a Fr\'echet sequence space for which $(e_n)_n$ has Property A. Then, for any $m\geq 1$ and $r\geq 1$, there is some $q\geq 1$ and some $C>0$ such that, for all $n\geq 0$,
\begin{equation}\label{eq3b}
\|e_n\|_r^m\leq C \|e_n\|_q.
\end{equation}
\end{lemma}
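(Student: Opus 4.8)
The plan is to prove the statement by induction on $m$, keeping $r$ fixed, where the only input at each step is Property A combined with the monotonicity of the seminorms $(\|\cdot\|_q)_q$. Note that I will not use the algebra structure at all, since the hypothesis is merely that $X$ is a Fr\'echet sequence space with Property A; the submultiplicativity \eqref{eq:subm} plays no role here.

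For the base case $m=1$ there is nothing to do: one may take $q=r$ and $C=1$, since $\|e_n\|_r^1=\|e_n\|_r$. For the inductive step, suppose that for some $m\geq 1$ we have already found $q\geq 1$ and $C>0$ with $\|e_n\|_r^m\leq C\|e_n\|_q$ for all $n\geq 0$. To estimate $\|e_n\|_r^{m+1}=\|e_n\|_r^m\cdot\|e_n\|_r$ I would first bring both factors under a single seminorm: setting $s=\max(q,r)$ and using that the seminorms are increasing gives $\|e_n\|_q\leq\|e_n\|_s$ and $\|e_n\|_r\leq\|e_n\|_s$, so that
\[
\|e_n\|_r^{m+1}\leq C\,\|e_n\|_q\,\|e_n\|_r\leq C\,\|e_n\|_s^2 .
\]
A single application of Property A, this time at the index $s$ rather than at $r$, then furnishes $t\geq 1$ and $D>0$ with $\|e_n\|_s^2\leq D\|e_n\|_t$ for all $n$, whence $\|e_n\|_r^{m+1}\leq CD\,\|e_n\|_t$. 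Taking the new admissible pair to be $(t,CD)$ closes the induction.

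There is no serious obstacle in this argument; the one point that requires a little care is that Property A must be invoked at the \emph{grown} index $s$, because the inductive hypothesis has already pushed the estimate up to the seminorm $\|\cdot\|_q$. Since the conclusion only asks for the existence of \emph{some} admissible $q$ and $C$ for each fixed $m$ and $r$ — with no uniformity in $m$ — the fact that both the index $t$ and the constant $CD$ increase with $m$ is harmless.
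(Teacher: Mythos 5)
Your proof is correct. It takes a genuinely different route from the paper's: the paper first iterates Property A to get the estimate for $m=2^N$ (each application squares the previous bound, so the exponent doubles and no merging of seminorm indices is needed along the way), and then treats a general $m$ with $2^N\leq m<2^{N+1}$ via the pointwise inequality $\|e_n\|_r^m\leq\max\{\|e_n\|_r^{2^N},\|e_n\|_r^{2^{N+1}}\}$, which holds because $a^m$ lies below $a^{2^N}$ or $a^{2^{N+1}}$ according to whether $a\leq 1$ or $a\geq 1$. Your linear induction in $m$ replaces that interpolation step by absorbing one extra factor $\|e_n\|_r$ after passing to the index $s=\max(q,r)$, which requires invoking the monotonicity of the seminorm family at every step (the paper needs it, implicitly, only once at the end to combine the two bounds arising from the max). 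Both arguments are elementary and yield the same non-uniformity in $m$; yours is arguably more transparent, while the paper's doubling scheme invokes Property A only about $\log_2 m$ times. Your observation that neither the algebra structure nor submultiplicativity enters is accurate --- the lemma is stated for a Fr\'echet sequence space, and the increasing arrangement of the seminorms that you use is part of the paper's standing setup.
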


\begin{proof} It follows in view of the definition of Property A that the result holds for $m=2^N$, $N\geq 0$. If $2^N\leq m< 2^{N+1}$, then the result follows from the fact that $\|e_n\|_r^m\leq \max\{\|e_n\|_r^{2^N}, \|e_n\|_r^{2^{N+1}}\}$. 
\end{proof}

As an application we obtain an improvement of \eqref{eq1} under Property A.

\begin{lemma}\label{coro:increasingsequenceroots}
Let $(X, (\|\cdot\|_q)_q)$ be a Fr\'echet sequence space for which $(e_n)_n$ has \textit{Property A}. Let $w=(w_n)_n$ be a weight. If $(p_{k})_{k}$ is an increasing sequence of natural numbers that satisfies \eqref{eq1} then, for any $m\geq 1$, $n\geq 0$,
\begin{equation*}
v_{p_{k}+n}^{-\frac{1}{m}}e_{p_{k}+n}\to 0
\end{equation*}
in $X$ as $k\to \infty$, were $v_{n}^{-\frac{1}{m}}$ is any $m$-th root of $v_{n}^{-1}$ in $\mathbb C$.
\end{lemma}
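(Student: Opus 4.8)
The plan is to reduce the claimed fractional-root convergence to the hypothesis \eqref{eq1} by raising the relevant quantity to the $m$-th power and invoking the strengthened Property A supplied by Lemma \ref{lem:propertyCS}. Since convergence to $0$ in the Fr\'echet space $X$ is equivalent to convergence to $0$ in each seminorm $\|\cdot\|_r$, and since every $m$-th root of $v_N^{-1}$ has modulus $|v_N|^{-1/m}$ (so that the particular choice of root is irrelevant), it suffices to fix $r\geq 1$ and a single $n\geq 0$ and to show that $|v_{p_k+n}|^{-1/m}\|e_{p_k+n}\|_r\to 0$ as $k\to\infty$.

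First I would apply Lemma \ref{lem:propertyCS} with the given $m$ and $r$ to produce some $q\geq 1$ and $C>0$ with $\|e_N\|_r^m\leq C\|e_N\|_q$ for all $N\geq 0$. The key computation is then to raise the target quantity to the $m$-th power and feed in this estimate:
\[
\left(|v_N|^{-1/m}\|e_N\|_r\right)^m = |v_N|^{-1}\|e_N\|_r^m \leq C\,|v_N|^{-1}\|e_N\|_q = C\,\|v_N^{-1}e_N\|_q .
\]
Specializing to $N=p_k+n$, the hypothesis \eqref{eq1} gives $\|v_{p_k+n}^{-1}e_{p_k+n}\|_q\to 0$ as $k\to\infty$, so the right-hand side tends to $0$; taking $m$-th roots then yields $|v_{p_k+n}|^{-1/m}\|e_{p_k+n}\|_r\to 0$, which is exactly what is needed. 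As $r\geq 1$ was arbitrary, the convergence holds in $X$.

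The crux here—rather than a genuine obstacle—is the observation that passing to $m$-th powers converts the delicate estimate involving the fractional power $|v_N|^{-1/m}$ into the already-established statement \eqref{eq1}, at the cost of replacing the seminorm $\|\cdot\|_r$ by a possibly larger seminorm $\|\cdot\|_q$. This controlled replacement of seminorms is precisely what the improved Property A of Lemma \ref{lem:propertyCS} provides. The only remaining point is the (harmless) freedom in the choice of $m$-th root of $v_N^{-1}$, which plays no role once one notes that only its modulus $|v_N|^{-1/m}$ enters the seminorm estimates.
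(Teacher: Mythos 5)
Your proof is correct and follows essentially the same route as the paper's: both invoke Lemma \ref{lem:propertyCS} to get $\|e_N\|_r^m\leq C\|e_N\|_q$ and then reduce the fractional-power estimate to \eqref{eq1} by the identity $\big(|v_N|^{-1/m}\|e_N\|_r\big)^m=|v_N|^{-1}\|e_N\|_r^m\leq C\|v_N^{-1}e_N\|_q$. The remark that only the modulus of the chosen root matters is a correct (and implicit in the paper) observation.
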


\begin{proof}
Let $m\geq 1$ and $r\geq 1$. Then there are $q\geq 1$ and $C>0$ such that \eqref{eq3b} holds for all $n\geq 0$. Hence we have, for any $n\geq 0$, 
\begin{align*}
\|v_{p_{k}+n}^{-\frac{1}{m}}e_{p_{k}+n}\|_r&=|v_{p_{k}+n}|^{-\frac{1}{m}}\|e_{p_{k}+n}\|_r\\
&=\big(|v_{p_{k}+n}|^{-1}\|e_{p_{k}+n}\|_r^m\big)^{\frac{1}{m}}\\
&\leq\big(|v_{p_{k}+n}|^{-1}\ C \|e_{p_{k}+n}\|_q\big)^{\frac{1}{m}}\quad(\text{by \eqref{eq3b}})\\
&=\big(C\|v_{p_{k}+n}^{-1}e_{p_{k}+n}\|_q\big)^{\frac{1}{m}}\to 0\quad(\text{by \eqref{eq1}})
\end{align*}
as $k\to \infty$. 
\end{proof}

Now we present our first result on the existence of algebras of hypercyclic vectors for weighted backward shifts on Fr\'echet sequence algebras.
 
\begin{theorem}\label{thrm:algebra1}
Let $(X, (\|\cdot\|_q)_q)$ be a Fr\'echet sequence algebra under coordinatewise multiplication in which $(e_{n})_{n}$ is a basis with Property A. Let $B_{w}$ be a hypercyclic weighted backward shift on $X$. If there exists an increasing sequence $(p_k)_k$ of natural numbers  satisfying \eqref{eq1} such that
\[
\text{for any $n\geq 0$,}\quad\prod_{\nu=0}^{p_k+n}w_{\nu}^{-1}\to 0 \quad\text{as $k\to\infty$},
\]
then there exists a point $x\in HC(B_{w})$ such that the algebra generated by $x$, except zero, is contained in $HC(B_{w})$.
\end{theorem}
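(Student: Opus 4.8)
The plan is to build, purely constructively, a single vector $x\in X$ supported on a sequence of disjoint blocks of coordinates, so that every nonzero element $P(x)=\sum_{j=1}^{d}c_jx^j$ (with $P(0)=0$) of the algebra it generates under coordinatewise multiplication is hypercyclic. Since $X$ is separable, hypercyclicity of $P(x)$ means: for every $u$ in a fixed countable dense subset $D\subseteq\varphi$, every $r\geq 1$ and every $\varepsilon>0$, there is some $N$ with $\|B_w^N P(x)-u\|_r<\varepsilon$. Using $B_w^Ne_n=\tfrac{v_n}{v_{n-N}}e_{n-N}$ for $n\geq N$ (and $0$ otherwise) together with coordinatewise powers, one computes $B_w^NP(x)=\sum_{m\geq 0}P(x_{m+N})\tfrac{v_{m+N}}{v_m}e_m$. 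To approximate a target $u=\sum_{i=0}^{L}u_ie_i$ by the $N$-th shift of a block placed at position $N$, I would choose the block values $x_{N+i}$, $0\le i\le L$, so that $P(x_{N+i})=u_i\,v_i\,v_{N+i}^{-1}$. Because $v_{N+i}^{-1}\to 0$ along the admissible times of \eqref{eq1} by hypothesis, the right-hand sides tend to $0$; if the lowest-order term of $P$ is $c_\ell t^\ell$, then a small root exists with $x_{N+i}=O\big(|v_{N+i}|^{-1/\ell}\big)$.

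To handle everything at once I would fix an enumeration running infinitely often through all requirements, each consisting of a target $u\in D$, a tolerance $1/s$, a seminorm index $r$, a degree $\ell\geq 1$ and a coefficient $c$ from a countable dense subset of $\mathbb{C}\setminus\{0\}$; to the $k$-th requirement I assign a block placed far to the right at an admissible time $N_k$ from \eqref{eq1}. There it suffices to match only the leading monomial: take $x_{N_k+i}$ to be an $\ell$-th root of $u_i\,v_i/(c\,v_{N_k+i})$, so that $x_{N_k+i}=O\big(|v_{N_k+i}|^{-1/\ell}\big)$. Here Lemma \ref{coro:increasingsequenceroots} is decisive, since it gives $\|v_{N_k+i}^{-1/\ell}e_{N_k+i}\|_r\to 0$, so each block can be made arbitrarily small in any prescribed seminorm. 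Choosing the blocks sparse enough and their values small enough, $x=\sum_k(\text{block }k)$ converges in the Fr\'echet space $X$, and its designated block forces $B_w^{N_k}\big(c\,x^\ell\big)\approx u$; hypercyclicity of $x$ itself is the case $\ell=1$, consistent with Proposition \ref{prop:increasingsequence}.

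The main obstacle is that the algebra must contain $P(x)$ for $P$ ranging over an uncountable family, while only countably many requirements can be installed. I would overcome this by the following observation, which is exactly where the smallness of the block values pays off. Fix a nonzero $P=\sum_{j=\ell}^{d}c_jt^j$ with $c_\ell\neq 0$, a target $u$ and a tolerance, and look at a block designed for degree $\ell$ and a coefficient $c$ close to $c_\ell$. The $i$-th coordinate of $B_w^{N_k}P(x)$ is $\sum_{j\ge \ell}c_j\,x_{N_k+i}^{j}\,v_{N_k+i}\,v_i^{-1}$; since $x_{N_k+i}^{\ell}v_{N_k+i}v_i^{-1}\approx u_i/c$, the term $j=\ell$ lies within $O\big(|c_\ell-c|\big)$ of $u_i$, whereas every term $j>\ell$ carries a factor $|v_{N_k+i}|^{1-j/\ell}\to 0$ and is thus negligible for large $N_k$, uniformly over coefficients in a bounded range. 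So matching only the lowest degree and the leading coefficient already suffices, and the dense countable set of pairs $(\ell,c)$ reaches every $P$. The remaining bookkeeping is cross-block interference: blocks below $N_k$ are annihilated by $B_w^{N_k}$, and blocks above contribute only to coordinates beyond the support of $u$; building the blocks inductively, with each new block chosen small enough to disturb the finitely many previously secured approximations by only a summably small amount, keeps all approximations valid in the limit. Combining these estimates produces the desired $x$.
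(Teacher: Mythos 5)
Your argument is correct and its skeleton coincides with the paper's: blocks placed at return times from \eqref{eq1}, each block an $\ell$-th root of a weighted forward shift of the target (this is exactly where Property A enters, via Lemma \ref{coro:increasingsequenceroots}, to make $v_{p_k+n}^{-1/\ell}e_{p_k+n}$ small in every seminorm), exact matching of the lowest-degree monomial, higher-degree terms killed by the factor $|v_{p_k+n}|^{1-j/\ell}\to 0$ supplied by the hypothesis $\prod_{\nu=0}^{p_k+n}w_{\nu}^{-1}\to 0$, and inductive smallness to control cross-block interference. The one genuine divergence is your handling of the leading coefficient $c_\ell$: you add a countable dense set of candidate coefficients $c$ to the list of requirements and absorb an error of order $|c_\ell-c|$, which costs an extra layer of bookkeeping and a uniformity argument over nearby coefficients. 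The paper avoids this entirely by observing that nonzero scalar multiples of hypercyclic vectors are hypercyclic, so one may normalize $c_\ell=1$ at the outset; the blocks then depend only on the pair (degree, target), and the lowest-degree term of the shifted algebra element hits the target exactly. Your variant works, but adopting that normalization would let you delete the coefficient parameter and the $O(|c_\ell-c|)$ estimate. One point you should make explicit in the interference step: a block installed at stage $k$ perturbs the approximation secured at an earlier time $N_{k'}$ through \emph{all} powers $x^{j}$ up to $\deg P$, which is unbounded as $P$ ranges over the algebra; this is harmless either because the seminorms are submultiplicative (a block of seminorm less than $1$ in a suitable index has all its powers small simultaneously) or, as in the paper's condition on the powers $\nu\le d_r$, because it suffices to control powers up to a bound that grows with the stage and then to use only sufficiently late return times for a given $P$.
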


\begin{proof}
To simplify our notation we will denote by $T$ the weighted backward shift $B_{w}$ on $X$.

We will associate each number $r\in\mathbb{N}$ with the $r$-th element of a fixed order in the set $\mathbb{N}\times \mathbb{N}$, and we simply write $r=(m,l)$.

For each natural number $m$ let us fix an $m$-th root of $w_{n}$, $n\geq 0$, which we denote by $w_{n}^{\frac{1}{m}}$; the $j$-th power of the latter number is denoted by $w_{n}^{\frac{j}{m}}$. Note that one has to distinguish, for example, $w_n^\frac{1}{2}$ from $w_n^\frac{2}{4}$.
 
Since $(e_{n})_{n}$ is a basis of $X$, $\varphi$ is dense in $X$. Let $(y^{(l)})_{l\geq 1}\subset \varphi$ be a dense sequence of non-zero points  in $X$ such that for each $l_{0}\in\mathbb N$ the element $y^{(l_{0})}$ appears infinitely many times in the sequence  $(y^{(l)})_{l}$. Let $s_{l}$ be the largest index of the non-zero coordinates of $y^{(l)}$. As before, for any $m\geq 1$, we fix an $m$-th root of $y^{(l)}_{n}$, $l\geq 1$, $n\geq 0$, written $(y^{(l)}_{n})^{\frac{1}{m}}$, and we denote the $j$-th power of that number by $(y^{(l)}_{n})^{\frac{j}{m}}$. 

Let $a,j,m,l\geq 1$.  We will in the sequel denote by
\[
( S^{a}y^{(l)})^{\frac{j}{m}}
\]
the $j$-th power of the point $F^{a}_{w^{-\frac{1}{m}}}(y^{(l)})^{\frac{1}{m}}$, where $w^{-\frac{1}{m}}=(1/w_{0}^{\frac{1}{m}}, 1/w_{1}^{\frac{1}{m}},\ldots)$ and $(y^{(l)})^{\frac{1}{m}}= ((y^{(1)}_{n})^{\frac{1}{m}}, (y^{(2)}_{n})^{\frac{1}{m}},\ldots)$. In other words,
\begin{equation}\label{eq:0}
( S^{a}y^{(l)})^{\frac{j}{m}} = \sum_{n=0}^{s_{l}} \frac{1}{w_{n+1}^{\frac{j}{m}}\cdots w_{n+a}^{\frac{j}{m}}} (y_n^{(l)})^{\frac{j}{m}}e_{n+a};
\end{equation}
in particular, $( S^{a}y^{(l)})^{\frac{m}{m}}=F^{a}_{w^{-1}}y^{(l)}$, so that 
\begin{equation}\label{eq:00}
T^a( S^{a}y^{(l)})^{\frac{m}{m}}=y^{(l)}. 
\end{equation}

We will now construct an increasing sequence of natural numbers $(a_{r})_{r\geq 1}$ with  $a_{r}\in \{p_k: k\geq 1\}$ and such that, if $r=(m,l)\in \mathbb N$, then
\begin{enumerate}[label=\textbf{A.\arabic*}]
\item \label{condition1} $\|	(S^{a_{r}} y^{(l)})^{\frac{1}{m}}\|_r< 2^{-r}$,
\end{enumerate}
and if $r\geq 2$ then
\begin{enumerate}[label=\textbf{A.\arabic*},resume]
\item \label{condition2} $\| T^{a_{t}}( S^{a_{r}}y^{(l)})^{\frac{\nu}{m}}\|_r< 2^{-r}$ for $1\leq t<r$ and $1\leq \nu \leq d_{r}$,
\item \label{condition3} $a_{r}-a_{r-1}> s_{\widetilde{l}}$ with $r-1=(\widetilde{m},\widetilde{l})$,
\end{enumerate}
where we set $d_{r}=\max_{(\widetilde{m},\widetilde{l})< r}\widetilde{m}$.

Let $(p_{k})_{k\geq 1}$ be an increasing sequence of natural numbers such that \eqref{eq1} holds (which exists by Proposition \ref{prop:increasingsequence}). By Lemma \ref{coro:increasingsequenceroots} we have that, for all $m\geq 1$, $n\geq 0$, 
\begin{equation}\label{eq:0b}
\frac{1}{w_{n+1}^{\frac{1}{m}}\cdots w_{n+p_k}^{\frac{1}{m}}}e_{n+p_{k}}=\frac{w_{0}^{\frac{1}{m}}\cdots w_{n}^{\frac{1}{m}}}{w_{0}^{\frac{1}{m}}\cdots w_{n+p_k}^{\frac{1}{m}}}e_{n+p_{k}} \to 0
\end{equation}
as $k\to\infty$. In view of \eqref{eq:0}, there exists $a_{1}\in \{p_k: k\geq 1\}$ that satisfies condition \ref{condition1}.

Let us now assume that we have fixed  $a_{1},\ldots, a_{r-1}$ $(r\geq 2)$ satisfying conditions \ref{condition1},  \ref{condition2} and \ref{condition3}. Assume $r=(m,l)$. Since multiplication is continuous in $X$, \eqref{eq:0b} implies that, for any $m,j\geq 1$, $n\geq 0$, 
\[
\frac{1}{w_{n+1}^{\frac{j}{m}}\cdots w_{n+p_k}^{\frac{j}{m}}}e_{n+p_{k}} \to 0
\]
as $k\to\infty$. Again by \eqref{eq:0}, and by continuity of $T$ at $0$, there is then some $a_r\in \{p_k: k\geq 1\}$, $a_r>a_{r-1}$, such that \ref{condition1}, \ref{condition2} and \ref{condition3} hold, and the induction process is completed.

In order to produce a hypercyclic algebra, we define  
\begin{equation}\label{eq:point}
x=\sum_{m=1}^{\infty} \sum_{l=1}^{\infty}	( S^{a_{(m,l)}}y^{(l)})^{\frac{1}{m}}=\sum_{r=1}^{\infty}(S^{a_{r}}y^{(l)})^{\frac{1}{m}}.
\end{equation}
By property \ref{condition1}, the series \eqref{eq:point} is convergent in $X$, so that $x\in X$. 

We first show that, for any $j\geq 1$, the $j$-th power of the point $x$ is hypercyclic for $T$. Fix a natural number $l_{0}$. Let us consider the number $t=(j,l_{0})$. Then 
\begin{align}\label{eq:pointb}
\begin{split}
T^{a_{t}}x^{j}&=T^{a_{t}}\Big(\sum_{r=1}^{\infty} 	(S^{a_{r}}y^{(l)})^{\frac{1}{m}} \Big)^{j}\\
&=T^{a_{t}}\sum_{r=t}^{\infty} (S^{a_{r}}y^{(l)})^{\frac{j}{m}}   \ \ \ (\text{by property \ref{condition3}})\\
&=T^{a_{t}}( S^{a_{t}}y^{(l_{0})})^{\frac{j}{j}}+\sum_{r=t+1}^{\infty} T^{a_{t}}(S^{a_{r}}y^{(l)})^{\frac{j}{m}} \\
&=y^{(l_{0})} +\sum_{r=t+1}^{\infty} T^{a_{t}}(S^{a_{r}}y^{(l)})^{\frac{j}{m}}.\ \ \ (\text{by } \eqref{eq:00})
\end{split}
\end{align}
Note that since $t=(j,l_{0})$ then $j\leq d_{r}$ for all $r>t$. Therefore, by property \ref{condition2},
\begin{equation}
\label{equation:2minusk}
\| T^{a_{t}}x^j - y^{(l_{0})}\|_t\leq \sum_{r=t+1}^{\infty} \| T^{a_{t}}(S^{a_{r}}y^{(l)})^{\frac{j}{m}}\|_r < 2^{-t}.
\end{equation}
Since the sequence $(y^{(l)})_{l}$ is dense in $X$ we have that $x^{j}\in HC(T)$ for any $j\geq 1$. 

To conclude, we show that any point $z\in X$ of the form
\[
z=\sum_{\nu=j}^{N}c_{\nu}x^{\nu}
\]
with $j\geq 1$ and $c_{j},\ldots,c_{N}\in \mathbb C$, $c_{j}\ne 0$, is hypercyclic for $T$. Since non-zero multiples of hypercyclic vectors are hypercyclic, we may assume that $c_{j}=1$, whence
\begin{equation}\label{eq:1}
T^{a_t}z=T^{a_t}x^j+\sum_{\nu=j+1}^{N}c_{\nu}T^{a_t}x^{\nu},\quad t\geq 1.
\end{equation} 

Let us fix a natural number $l_{0}$. Since the element $y^{(l_{0})}$ is repeated infinitely many times in the sequence $(y^{(l)})_{l}$ there exists an increasing sequence of natural numbers $(l_{i})_{i}$ with $y^{(l_{i})}=y^{(l_{0})}$ for all $i\geq 1$. By \eqref{equation:2minusk} we have for each $t=(j,l_{i})\in\mathbb N$, $i\geq 1$,
\begin{equation}
\label{eqtozeroj}
\|T^{a_{t}}x^{j} - y^{(l_{0})}\|_t < 2^{-t}.
\end{equation}

Also, we have for any $\nu\geq 1$
\begin{equation}\label{eq:2}
T^{a_{t}}x^{\nu} =T^{a_{t}}( S^{a_{t}}y^{(l_{0})})^{\frac{\nu}{j}}+\sum_{r=t+1}^{\infty} T^{a_{t}}(S^{a_{r}}y^{(l)})^{\frac{\nu}{m}}, 
\end{equation}
see \eqref{eq:pointb}.

For the first term, we obtain from \eqref{eq:0} that
\[
T^{a_{t}}( S^{a_{t}}y^{(l_{0})})^{\frac{\nu}{j}} = \sum_{n=0}^{s_{l_0}} \frac{w_{n+1}\cdots w_{n+a_t}}{w_{n+1}^{\frac{\nu}{j}}\cdots w_{n+a_t}^{\frac{\nu}{j}}} (y_n^{(l_0)})^{\frac{\nu}{j}}e_n.
\]
By hypothesis we have that $|v_{n+p_k}|\to\infty$ as $k\to\infty$, for all $n\geq 0$. Since, by construction, $(a_t)_t$ is a subsequence of $(p_k)_k$ we have for any $n\geq 0$ and $\nu>j$,
\[
\Big|\frac{w_{n+1}\cdots w_{n+a_t}}{w_{n+1}^{\frac{\nu}{j}}\cdots w_{n+a_t}^{\frac{\nu}{j}}}\Big| = \frac{|v_n|^{\frac{\nu}{j}-1}}{|v_{n+a_t}|^{\frac{\nu}{j}-1}}\to 0
\]
as $(t=(j,l_{i}))_{i}$ goes to infinity. This implies that, for $\nu>j$,
\begin{equation}\label{eq:4}
T^{a_{t}}( S^{a_{t}}y^{(l_{0})})^{\frac{\nu}{j}}\to 0.
\end{equation}

For the second term in \eqref{eq:2}, it follows from property \ref{condition2} that whenever $d_t\geq \nu$ then
\begin{equation}\label{eq:5}
\Big\|\sum_{r=t+1}^{\infty} T^{a_{t}}(S^{a_{r}}y^{(l)})^{\frac{\nu}{m}} \Big\|_t \leq \sum_{r=t+1}^{\infty} \|T^{a_{t}}(S^{a_{r}}y^{(l)})^{\frac{\nu}{m}} \|_r < 2^{-t}.
\end{equation}

Therefore, by equations \eqref{eq:1}, \eqref{eqtozeroj}, \eqref{eq:2}, \eqref{eq:4} and \eqref{eq:5} we have that
\begin{align*}
T^{a_{t}}z\to y^{(l_{0})}
\end{align*}
as $(t=(j,l_{i}))_{i}$ goes to infinity. The proof is completed by the density of the sequence $(y^{(l)})_{l}$.
\end{proof}

We now make a refinement of the previous proof to obtain that $HC(B_{w})$ contains an algebra that is not finitely generated.

\begin{theorem}\label{genresl}
Let $(X, (\|\cdot\|_q)_q)$ be a Fr\'echet sequence algebra under coordinatewise multiplication in which $(e_{n})_{n}$ is a basis with Property A. Let $B_{w}$ be a hypercyclic weighted backward shift on $X$. If there exists an increasing sequence $(p_k)_k$ of natural numbers satisfying \eqref{eq1} such that
\[
\text{for any $n\geq 0$,}\quad\prod_{\nu=0}^{p_k+n}w_{\nu}^{-1}\to 0 \quad\text{as $k\to\infty$},
\]
then $HC(B_{w})$ contains an algebra, except zero, that is not finitely generated.

In other words, $HC(B_w)$ is algebrable.
\end{theorem}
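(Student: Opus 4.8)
The plan is to refine the construction in the proof of Theorem \ref{thrm:algebra1} by building countably many generators $x_1,x_2,\ldots$ simultaneously, so that every nonzero element of the algebra they generate is hypercyclic, and then to show directly that this algebra cannot be finitely generated.

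First I would fix a sequence $(\theta_i)_{i\geq 1}$ of positive reals that is linearly independent over $\mathbb{Q}$ (for instance $\theta_i=\log p_i$ with $p_i$ the $i$-th prime), and assign to each finitely supported multi-index $\alpha=(\alpha_i)_i$ the weight $\|\alpha\|_\theta=\sum_i \alpha_i\theta_i$; by rational independence this weight is injective on multi-indices. I would index the construction by the pairs $r=(\alpha,l)$, where $\alpha$ runs through the finitely supported multi-indices with $|\alpha|\geq 1$ and $l\geq 1$ through the targets $y^{(l)}$, fixing a bijection with $\mathbb{N}$. As in Theorem \ref{thrm:algebra1} I would choose an increasing sequence $(a_r)\subseteq\{p_k\}$ with large gaps and let the contribution of $x_i$ at the block $a_r$, for each $i$ with $\alpha_i\geq 1$, be $(S^{a_r}y^{(l)})^{\theta_i/\|\alpha\|_\theta}$, taking all fractional powers through a fixed branch of the logarithm so that the composition law holds coordinatewise; for $i\notin\mathrm{supp}(\alpha)$ the generator $x_i$ receives no contribution at $a_r$. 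Thus $x_i=\sum_{r=(\alpha,l):\,\alpha_i\geq 1}(S^{a_r}y^{(l)})^{\theta_i/\|\alpha\|_\theta}$. The gaps make the supports of distinct blocks disjoint, so any monomial $\mathbf{x}^\beta=\prod_i x_i^{\beta_i}$ restricted to the block $a_{(\alpha,l)}$ vanishes unless $\mathrm{supp}(\beta)\subseteq\mathrm{supp}(\alpha)$, in which case it equals $(S^{a_{(\alpha,l)}}y^{(l)})^{\|\beta\|_\theta/\|\alpha\|_\theta}$.

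With these blocks in place I would impose, by choosing each $a_r$ large enough, smallness conditions in the spirit of \ref{condition1}--\ref{condition3}: that the defining series of each $x_i$ converges (using the analogue of Lemma \ref{coro:increasingsequenceroots} for real exponents in $(0,1]$), that $\|T^{a_t}(S^{a_r}y^{(l)})^{\|\beta\|_\theta/\|\alpha\|_\theta}\|_r<2^{-r}$ for all $t<r$ and all $\beta$ with $\mathrm{supp}(\beta)\subseteq\mathrm{supp}(\alpha)$ and $\|\beta\|_\theta$ below a bound $d_r\to\infty$, and that consecutive blocks are separated by more than the width $s_l$ of the later bump. Then, given any nonzero $z=\sum_{\beta\in S}c_\beta\mathbf{x}^\beta$ with $S$ finite and each $c_\beta\neq 0$, I would let $\alpha^*$ be the unique $\|\cdot\|_\theta$-minimal element of $S$ and follow the times $a_t$ with $t=(\alpha^*,l_i)$ and $y^{(l_i)}=y^{(l_0)}$. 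At such a block the term $\mathbf{x}^{\alpha^*}$ contributes the full power $(S^{a_t}y^{(l_0)})^{1}$, so $T^{a_t}\mathbf{x}^{\alpha^*}=y^{(l_0)}$ by \eqref{eq:00}; every other $\beta\in S$ either has $\mathrm{supp}(\beta)\not\subseteq\mathrm{supp}(\alpha^*)$ and contributes nothing at this block, or has $\|\beta\|_\theta>\|\alpha^*\|_\theta$, in which case $T^{a_t}(S^{a_t}y^{(l_0)})^{\|\beta\|_\theta/\|\alpha^*\|_\theta}\to 0$ exactly as in \eqref{eq:4}, using the hypothesis $|v_{n+a_t}|\to\infty$. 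The earlier blocks are annihilated by the gap condition and the later blocks are controlled by the smallness conditions, so $T^{a_t}z\to c_{\alpha^*}y^{(l_0)}$; since $(y^{(l)})_l$ is dense and $c_{\alpha^*}\neq 0$, this gives $z\in HC(B_w)$.

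It remains to see that the algebra $A$ generated by $(x_i)_i$ is not finitely generated, and here the block structure yields a direct argument avoiding any appeal to algebraic independence. If $A$ were finitely generated, the generators would involve only finitely many of the variables, say $x_1,\ldots,x_K$, so that $x_{K+1}\in A\subseteq\mathrm{alg}(x_1,\ldots,x_K)$; I would then evaluate at the block $a_r$ with $r=(e_{K+1},l)$, where $e_{K+1}$ is the multi-index of the monomial $x_{K+1}$. There $x_{K+1}$ contributes $(S^{a_r}y^{(l)})^{1}\neq 0$, whereas every monomial without constant term in $x_1,\ldots,x_K$ vanishes, since none of $x_1,\ldots,x_K$ receives a contribution at a block indexed by $e_{K+1}$; this contradiction shows $x_{K+1}\notin\mathrm{alg}(x_1,\ldots,x_K)$ and hence that $A$ is infinitely but not finitely generated. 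I expect the main obstacle to be the bookkeeping for the uniform smallness conditions — one must bound $T^{a_t}$ of the block contributions of \emph{all} monomials $\mathbf{x}^\beta$ up to the growing complexity $d_r$ at once — together with the verification that the rationally independent weights $\theta_i$ genuinely force a unique $\|\cdot\|_\theta$-minimal leading monomial, which is precisely what rules out the collisions between distinct monomials of the same ordinary degree that would otherwise corrupt the limit.
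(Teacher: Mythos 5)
Your argument is correct in outline, but it takes a genuinely different route from the paper. The paper keeps the single family of blocks from Theorem \ref{thrm:algebra1} and simply partitions the dense sequence $(y^{(l)})_l$ into infinitely many dense subsequences $(y^{(l)})_{l\in\mathbb{N}_k}$, letting the $k$-th generator $x^{(k)}$ use only the blocks with $l\in\mathbb{N}_k$; under coordinatewise multiplication the generators then have pairwise disjoint supports, so $x^{(k)}x^{(k')}=0$ for $k\neq k'$ (this is \eqref{eq:20}). That single identity collapses every element of the algebra to a sum of pure powers $\sum_{\nu,k}c_{\nu,k}(x^{(k)})^{\nu}$, so the hypercyclicity argument is a near-verbatim repetition of the one-generator proof, and non-finite-generation is immediate. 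You instead let every generator contribute to every block, separating monomials by the injective weight $\|\alpha\|_\theta=\sum_i\alpha_i\theta_i$ with rationally independent $\theta_i$ and isolating the $\|\cdot\|_\theta$-minimal monomial $\alpha^*$ of a given algebra element; the orthogonality trick is replaced by the dichotomy ``support not contained in $\mathrm{supp}(\alpha^*)$, hence zero on the block'' versus ``exponent strictly greater than $1$, hence tends to zero as in \eqref{eq:4}''. This costs more bookkeeping: you must extend Lemmas \ref{lem:propertyCS} and \ref{coro:increasingsequenceroots} to arbitrary real exponents (routine, by sandwiching between consecutive integer powers) and impose the analogue of \ref{condition2} uniformly over the finitely many $\beta$ with $\mathrm{supp}(\beta)\subseteq\mathrm{supp}(\alpha)$ and $\|\beta\|_\theta\leq d_r$ at each stage, which is feasible. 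But it also buys something the paper's construction does not: since every nonzero constant-term-free polynomial in your generators is hypercyclic, hence nonzero, your generators are algebraically independent and generate a free algebra, whereas the paper's generators satisfy the relations $x^{(k)}x^{(k')}=0$; your construction would thus even yield strong algebrability. Your non-finite-generation argument (evaluating at a block indexed by the multi-index supported on $\{K+1\}$, where every monomial in $x_1,\ldots,x_K$ vanishes but $x_{K+1}$ does not) is sound.
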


\begin{proof}
We begin the proof as in Theorem \ref{thrm:algebra1}. With the notation defined there we obtain again a dense sequence $(y^{(l)})_l\subset \varphi$ of non-zero points in $X$ and an increasing sequence of natural numbers $(a_{r})_{r\geq 1}$ with  $a_{r}\in \{p_k: k\geq 1\}$ such that, if $r=(m,l)\in \mathbb N$, then
\begin{enumerate}[label=\textbf{A.\arabic*}]
\item \label{condition1b} $\|	(S^{a_{r}} y^{(l)})^{\frac{1}{m}}\|_r< 2^{-r}$,
\end{enumerate}
and if $r\geq 2$, then
\begin{enumerate}[label=\textbf{A.\arabic*},resume]
\item \label{condition2b} $\| T^{a_{t}}( S^{a_{r}}y^{(l)})^{\frac{\nu}{m}}\|_r< 2^{-r}$ for $1\leq t<r$ and $1\leq\nu \leq d_{r}$,
\item \label{condition3b} $a_{r}-a_{r-1}> s_{\widetilde{l}}$ with $r-1=(\widetilde{m},\widetilde{l})$,
\end{enumerate}
where $d_{r}=\max_{(\widetilde{m},\widetilde{l})< r}\widetilde{m}$.

Let us now consider a partition of the natural numbers into an infinite number of infinite sets $\mathbb N_{k}$, $k\geq 1$, such that the sequence $(y^{(l)})_{l\in\mathbb N_{k}}$ is dense in $X$ for any $k\geq 1$.  We can assume that for each $l_{0}\in\mathbb N_k$ the element $y^{(l_{0})}$ appears infinitely many times in the sequence  $(y^{(l)})_{l\in\mathbb{N}_k}$.

For each natural number $k$ we consider the vector 
\[
x^{(k)}=\sum_{m=1}^{\infty} \sum_{l\in\mathbb N_{k}}	( S^{a_{(m,l)}}y^{(l)})^{\frac{1}{m}}.
\]
It follows from condition \ref{condition1b} that these series converge in $X$, so that $x^{(k)}\in X$. 

Note that, by condition \ref{condition3b} and the fact that the sets $\mathbb{N}_k$ are pairwise disjoint, we have that
\begin{equation}\label{eq:20}
x^{(k)}x^{(k')}=0\text{ if $k\ne k'$.}
\end{equation}

Let $\mathcal{A}$ be the algebra generated by $(x^{(k)})_k$. Since finitely many elements of $\mathcal{A}$ only involve a finite number of the elements $x^{(k)}$, $k\geq 1$, \eqref{eq:20} shows that $\mathcal{A}$ is not finitely generated. Thus, to complete the proof, it suffices to show that any non-zero point in $\mathcal{A}$ is hypercyclic for $T$.

Let $z\in\mathcal{A}\setminus\{0\}$. We can write
\begin{equation}\label{eq:21}
z=\sum_{\substack{\beta\in I \subset\mathbb{N}_0^s\\ \beta\neq 0}}c_{\beta}(x^{(1)})^{\beta_{1}}\cdots (x^{(s)})^{\beta_{s}}
\end{equation}
for some $s\geq 1$ and $I$ finite, where $(x^{(k)})^0=e$. By \eqref{eq:20}, this reduces to
$$
z=\sum_{\nu=j}^{N}Q_{\nu}
$$
with $1\leq j\leq N$ and $Q_j\neq 0$, where $Q_{\nu}$ is the $\nu$-homogeneous part of $z$, 
\[
Q_{\nu}=\sum_{k=1}^{s}c_{\nu,k}(x^{(k)})^{\nu}.
\]
Since $Q_j$ is not zero, we may assume that there is some $k'$ such that $c_{j,k'}=1$.

Let us fix $l_{0}\in\mathbb N_{k'}$. Then, for $t=(j,l_{0})$, a calculation as in \eqref{eq:pointb} together with condition \ref{condition2b} shows that
\begin{align*}
\Vert T^{a_{t}}(Q_{j}) - y^{(l_{0})}\Vert_t
& \leq  \sum_{k=1}^{s} \vert c_{j,k}\vert\sum_{\substack{r\geq t+1\\r=(m,l),\ l\in\mathbb N_{k}}} \Vert T^{a_{t}}(S^{a_{r}}y^{(l)})^{\frac{j}{m}} \Vert_r\\
& \leq  \sum_{k=1}^{s} \vert c_{j,k}\vert \sum_{r=t+1}^{\infty} \Vert  T^{a_{t}}(S^{a_{r}}y^{(l)})^{\frac{j}{m}} \Vert_r\\
&<   2^{-t} \sum_{k=1}^{s} \vert c_{j,k}\vert.
\end{align*}

Since there exists an increasing sequence of natural numbers $(l_{i})_{i}\subset\mathbb N_{k'}$ with $y^{(l_{i})}=y^{(l_{0})}$ for all $i$, we have that
\[
T^{a_{t}}(Q_{j}) \to y^{(l_{0})}
\]
as $(t=(j,l_{i}))_{i}$ goes to infinity.

The same argument as in \eqref{eq:2}, \eqref{eq:4} and \eqref{eq:5} shows that, for any $\nu>j$, $T^{a_{t}}(Q_{\nu})\to 0$ when the sequence $(t=(j,l_{i}))_{i}$ goes to infinity. Hence, 
$$
T^{a_{t}}z\to y^{(l_{0})}
$$ 
when $(t=(j,l_{i}))_{i}$ goes to infinity. The density of the sequence $(y^{(l)})_{l\in\mathbb{N}_{k'}}$ implies that $z\in HC(T)$, which had to be shown.
\end{proof}

The hypothesis on the weight $w$ in Theorems \ref{thrm:algebra1} and \ref{genresl} is slightly technical. However, it allows us to treat general hypercyclic operators in the two cases of greatest interest.  

\begin{corollary}\label{corrolalg}
Let $B_w$ be a hypercyclic weighted backward shift on $\ell^p$, $1\leq p<\infty$, or $c_0$, which we consider as Banach sequence algebras under the coordinatewise multiplication. Then the set $HC(B_w)$ of hypercyclic vectors for $B_w$ is algebrable. This applies, in particular, to the Rolewicz operators $\lambda B$, $|\lambda|>1$.
\end{corollary}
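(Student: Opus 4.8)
The plan is simply to invoke Theorem \ref{genresl}; the whole content of the corollary is that its hypotheses are satisfied, and in fact the somewhat technical condition on the weight becomes automatic in these spaces. First I would record the structural facts from Example \ref{propA}(a): each of $\ell^p$, $1\leq p<\infty$, and $c_0$ is a Banach (hence Fr\'echet) sequence algebra under coordinatewise multiplication, in which the canonical vectors $(e_n)_n$ form a bounded basis and therefore have Property A. Since $B_w$ is assumed hypercyclic on $X$, the only remaining point is to produce an increasing sequence $(p_k)_k$ satisfying \eqref{eq1} for which $\prod_{\nu=0}^{p_k+n}w_\nu^{-1}\to 0$ for every $n\geq 0$.

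Here is the crux. In both $\ell^p$ and $c_0$ one has $\|e_n\|=1$ for all $n$, so the norm of $v_{p_k+n}^{-1}e_{p_k+n}$ equals $|v_{p_k+n}|^{-1}$. Consequently, for these spaces condition \eqref{eq1} is literally the assertion that $|v_{p_k+n}|^{-1}\to 0$, that is, $\prod_{\nu=0}^{p_k+n}w_\nu^{-1}\to 0$ as $k\to\infty$, for each fixed $n\geq 0$. Now Proposition \ref{prop:increasingsequence} produces, from the hypercyclicity of $B_w$, an increasing sequence $(p_k)_k$ satisfying \eqref{eq1}; for this very sequence the product condition therefore holds automatically. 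Both hypotheses of Theorem \ref{genresl} are thus in force, and the theorem yields that $HC(B_w)$ is algebrable.

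For the Rolewicz operators $\lambda B$ with $|\lambda|>1$, the weight is $w_n=\lambda$ for $n\geq 1$ (with $w_0=1$ by convention), so $v_n=\lambda^n$ and $|v_n|=|\lambda|^n\to\infty$. These operators are classically hypercyclic on $\ell^p$ and $c_0$, and the computation above shows that the required conditions hold with, say, $p_k=k$; hence the corollary applies to them as a special case.

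I expect no genuine obstacle here: the one point worth isolating is the normalization $\|e_n\|=1$, which collapses the technical weight hypothesis of Theorem \ref{genresl} onto the bare hypercyclicity criterion \eqref{eq1}. Everything else is routine bookkeeping.
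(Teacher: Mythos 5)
Your proof is correct and is essentially identical to the paper's own argument: Property A comes from Example \ref{propA}(a), and since $\|e_n\|=1$ the weight hypothesis of Theorem \ref{genresl} collapses to condition \eqref{eq1}, which Proposition \ref{prop:increasingsequence} supplies from hypercyclicity. The explicit verification for the Rolewicz operators is a harmless addition the paper leaves implicit.
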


Indeed, since $\|e_n\|=1$ for all $n\geq 0$, the hypothesis on $w$ follows immediately from Proposition \ref{prop:increasingsequence}, that is, from hypercyclicity. Property A follows from part (a) of  Example \ref{propA}.

The space $H(\mathbb{C})$ of entire functions is a Fr\'echet algebra when endowed with the Hadamard product
\[
(f*g)(z) =\sum_{n=0}^\infty a_nb_nz^n,\quad z \in \mathbb C
\]
for $f(z)=\sum_{n=0}^\infty a_nz^n$ and $g(z)=\sum_{n=0}^\infty b_nz^n$, see \cite{RS96}. When we identify entire functions with their sequence of Taylor coefficients at 0 then $H(\mathbb{C})$ turns into a Fr\'echet sequence algebra. Again, since $\|e_n\|_1=1$ for all $n\geq 0$, the hypothesis on $w$ follows immediately from hypercyclicity via Proposition \ref{prop:increasingsequence}.  Property A follows from part (b) of  Example \ref{propA}.

\begin{corollary}\label{corrolalg2}
Let $B_w$ be a hypercyclic weighted backward shift on $H(\mathbb{C})$, which we consider as a Fr\'echet sequence algebra under the Hadamard product. Then the set $HC(B_w)$ of hypercyclic vectors for $B_w$ is algebrable. This applies, in particular, to the MacLane operator $D$ of differentiation.
\end{corollary}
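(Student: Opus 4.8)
The plan is to deduce this corollary directly from Theorem \ref{genresl} by realizing $H(\mathbb{C})$, equipped with the Hadamard product, as a Fr\'echet sequence algebra under coordinatewise multiplication. The crucial observation is that once an entire function is identified with its sequence of Taylor coefficients at $0$, the Hadamard product $(f*g)(z)=\sum_n a_nb_n z^n$ becomes nothing other than the coordinatewise product $(a_n)_n(b_n)_n=(a_nb_n)_n$ of the coefficient sequences. Thus the hypotheses of Theorem \ref{genresl} are phrased in exactly the right category, and it remains only to check each of them in this concrete setting.

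First I would fix the representation of $H(\mathbb{C})$ as a sequence space with the seminorms $\|(a_n)_n\|_q=\sum_{n=0}^\infty |a_n|q^n$, $q\geq 1$, as in Example \ref{propA}(b). These are submultiplicative, the canonical sequences $(e_n)_n$ form a basis, and by Example \ref{propA}(b) this basis has Property A, since $\|e_n\|_r^2=\|e_n\|_{r^2}$. This settles the structural hypotheses of the theorem.

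Next I would verify the technical hypothesis on the weight. Since $B_w$ is assumed hypercyclic, Proposition \ref{prop:increasingsequence} furnishes an increasing sequence $(p_k)_k$ satisfying \eqref{eq1}. Here the special feature of these seminorms pays off: because $\|e_n\|_1=1$ for every $n$, evaluating \eqref{eq1} at $q=1$ gives $|v_{p_k+n}|^{-1}=\|v_{p_k+n}^{-1}e_{p_k+n}\|_1\to 0$, that is $\prod_{\nu=0}^{p_k+n}w_\nu^{-1}\to 0$ as $k\to\infty$ for every $n\geq 0$. This is precisely the weight hypothesis of Theorem \ref{genresl}, so the theorem applies and $HC(B_w)$ is algebrable.

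Finally, for the MacLane operator I would record that differentiation acts on Taylor coefficients by $(a_n)_n\mapsto((n+1)a_{n+1})_n$, so that $D=B_w$ with $w_n=n$ for $n\geq 1$ (and $w_0=1$), whence $v_n=n!$; the classical hypercyclicity of $D$ is witnessed by $\|v_n^{-1}e_n\|_q=q^n/n!\to 0$, and the special case then follows from the general statement. I do not expect a genuine obstacle here, as all the combinatorial construction sits inside Theorem \ref{genresl}; the only point requiring care---and the conceptual heart of the argument---is the identification of the Hadamard product with coordinatewise multiplication of Taylor coefficients, which is exactly what makes the general theorem applicable.
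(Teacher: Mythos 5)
Your proposal is correct and follows essentially the same route as the paper: identify the Hadamard product with coordinatewise multiplication of Taylor coefficients, take the seminorms $\|(a_n)_n\|_q=\sum_n|a_n|q^n$ so that Property A holds by Example \ref{propA}(b), and use $\|e_n\|_1=1$ to read the weight hypothesis of Theorem \ref{genresl} directly off Proposition \ref{prop:increasingsequence}. The treatment of the MacLane operator as $B_w$ with $w_n=n$ and $v_n=n!$ also matches the intended specialization.
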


Finally, on the space $\omega$ of all sequences, condition \eqref{eq1} holds trivially for any weighted backward shift, so it no longer implies the hypothesis in the above theorems. We only state here a special case. Recall that  Property A holds for the space $\omega$  by part (c) of  Example \ref{propA}.

\begin{corollary}\label{corrolalg3}
Let $B_w$ be a weighted backward shift on $\omega$, which we consider as a Fr\'echet sequence algebra under coordinatewise multiplication. If  $\prod_{k=0}^n w_k^{-1}\to 0$ as $n\to\infty$, then the set $HC(B_w)$ of hypercyclic vectors for $B_w$ is algebrable. 
\end{corollary}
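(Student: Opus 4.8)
The plan is to derive this as a direct consequence of Theorem \ref{genresl}, so the task reduces to checking that the three hypotheses of that theorem are met for $X=\omega$: that $\omega$ is a Fréchet sequence algebra under coordinatewise multiplication in which $(e_n)_n$ is a basis with Property A, that $B_w$ is a hypercyclic weighted backward shift, and that there is an increasing sequence $(p_k)_k$ satisfying \eqref{eq1} together with the weight condition. The one genuinely new input, compared with the $\ell^p$, $c_0$ and $H(\mathbb{C})$ cases, is the added assumption $\prod_{k=0}^n w_k^{-1}\to 0$; everything else will turn out to be automatic on $\omega$.

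First I would recall from Example \ref{propA}(c) that $\omega$, equipped with the seminorms $\|x\|_q=\sup_{0\le n\le q}|x_n|$ and with coordinatewise multiplication, is a Fréchet sequence algebra whose basis $(e_n)_n$ has Property A. That $(e_n)_n$ is indeed a basis is clear, since the partial sums $\sum_{n=0}^N x_n e_n$ converge to $x$ in the product topology.

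Next I would observe that $B_w$ is a continuous operator on $\omega$ by the closed graph remark in Section \ref{sec1}, and that it is hypercyclic. The key point is that in the coordinatewise topology condition \eqref{eq1} holds for \emph{any} increasing sequence $(p_k)_k$: for each fixed $n\ge 0$ and each seminorm $\|\cdot\|_q$ one has $\|v_{p_k+n}^{-1}e_{p_k+n}\|_q=0$ as soon as $p_k+n>q$, so $v_{p_k+n}^{-1}e_{p_k+n}\to 0$ trivially as $k\to\infty$. By Proposition \ref{prop:increasingsequence} this already makes $B_w$ hypercyclic.

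Finally I would supply the weight condition of Theorem \ref{genresl}. Taking for instance $p_k=k$, condition \eqref{eq1} holds by the previous paragraph, and for each fixed $n\ge 0$ we have $\prod_{\nu=0}^{p_k+n}w_\nu^{-1}=v_{p_k+n}^{-1}$, which tends to $0$ as $k\to\infty$ precisely because $p_k+n\to\infty$ and, by hypothesis, $v_m^{-1}=\prod_{\nu=0}^m w_\nu^{-1}\to 0$ as $m\to\infty$. Thus all hypotheses of Theorem \ref{genresl} are satisfied, and that theorem yields an algebra contained, except for zero, in $HC(B_w)$ that is not finitely generated; that is, $HC(B_w)$ is algebrable. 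I do not expect any real obstacle here: the entire content lies in verifying that $\omega$ fits the framework, with the sole substantive hypothesis being the added condition on the weight, which is used exactly as in the corollaries for $\ell^p$ and $H(\mathbb{C})$ to force $v_{p_k+n}^{-1}\to 0$.
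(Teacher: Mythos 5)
Your proposal is correct and follows exactly the route the paper intends: Property A via Example \ref{propA}(c), the observation that \eqref{eq1} holds trivially on $\omega$ (so hypercyclicity is automatic and the weight condition must be imposed separately), and then an application of Theorem \ref{genresl} with the hypothesis $\prod_{k=0}^n w_k^{-1}\to 0$ supplying that condition. The paper leaves this verification implicit in the surrounding discussion, and your write-up fills it in faithfully.
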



\section{Fr\'echet sequence algebras under the Cauchy product}
\label{cauchy}

In this section we focus on the study of dynamical systems where the underlying sequence space is a Fr\'echet algebra whose multiplicative structure is given by the Cauchy product. The Cauchy product is the natural structure that appears when we multiply two power series. Given $\sum_{k=0}^\infty a_n z^{n}$ and  $\sum_{n=0}^\infty b_n z^{n}$ we have formally, after regrouping the terms with the same degree, that $(\sum_{n=0}^\infty a_n z^{n}) \cdot (\sum_{n=0}^\infty b_n z^{n}) = \sum_{n=0}^\infty c_n  z^{n}$ where $c_n=\sum_{k=0}^n a_k b_{n-k}$. Even more, if the  power series $\sum_{n=0}^\infty a_n z^{n}$ has radius of convergence $R_{1}$ and the power series  $\sum_{n=0}^\infty b_n z^{n}$ has radius of convergence $R_{2}$, then the resulting power series $\sum_{n=0}^\infty c_n  z^{n}$ has a radius of convergence of at least $\min\{R_{1},R_{2}\}$. In general, the Cauchy product of two sequences $x=(x_n)_n$ and $y=(y_n)_n$ is defined by the discrete convolution 
\begin{equation*}
x \ast y = (z_n)_n,\quad\text{where } z_n=\sum_{k=0}^n x_k y_{n-k}, \quad n\geq 0.
\end{equation*}
For the sake of clarity we will write the Cauchy product of two sequences $x$, $y$ as $x\ast y$, while the $n$-fold Cauchy product will be written as $x^n=x\ast\ldots\ast x$ to avoid a more cumbersome notation.

\begin{example}\label{seqalg}
(a) The most natural example of a Fr\'echet algebra in this scenario is the Fr\'echet space $H(\mathbb{C})$ of entire functions, which we 
consider again as a sequence space via Taylor coefficients at 0, see Example \ref{propA}. Its natural topology is  induced by the family of seminorms
\[
\|(a_n)_{n\geq 0}\|_q = \sup_{|z|\leq q}\Big|\sum_{n=0}^\infty a_n z^n\Big|,\quad q\geq 1.
\]
Then $H(\mathbb{C})$ becomes a Fr\'echet sequence algebra under the Cauchy product.

(b) The sequence space $\ell^1$ is a Banach sequence algebra under its usual norm when endowed with the Cauchy product.

(c) The product topology of the space $\omega$ of all sequences is generated by the increasing sequence of seminorms
$$
\|x\|_q = \sum_{n=0}^q \vert x_{n}\vert,\quad q\geq 1.
$$ 
Then $\omega$ is a Fr\'echet sequence algebra under the Cauchy product.
\end{example}

In order to translate the results obtained in Section \ref{pointwise} to algebras that are defined by Cauchy products we need again to impose conditions on the weight $w$ that defines the weighted backward shift $B_{w}$ and on the basis $(e_n)_n$, as we did in Theorems \ref{thrm:algebra1} and \ref{genresl}. 

As for the weight, we will demand that $B_w$ is mixing. Recall that a weighted backward shift $B_w$ on a Fr\'echet sequence space in which $(e_{n})_{n}$ is a basis is mixing if and only if
\begin{equation}\label{eq:mix}
\frac{1}{\prod_{k=0}^n w_k}e_n\to 0
\end{equation}
in $X$ as $n\to\infty$, see \cite[Theorem 4.8]{GrPe11}. 

As for $(e_n)_n$, we introduce a new property.

\begin{definition}\label{def:propertyB}
Let $(X, (\|\cdot\|_q)_q)$ be a Fr\'echet sequence space that contains the finite sequences. We say that $(e_n)_n$ has \textit{Property B} if
the following conditions hold:
\begin{enumerate}
\item[(i)] there is some $q\geq 1$ such that $\|e_n\|_q>0$ for all $n\geq 0$;
\item[(ii)] for any $r\geq 1$ there is some $q\geq 1$ and some $C_1>0$ such that, for all $n,k\geq 0$,
\[
\|e_n\|_r\cdot \|e_k\|_r\leq C_1 \|e_{n+k}\|_q;
\]
\item[(iii)] for any $m\geq 2$, $M\geq 1$, $r\geq 1$ there is some $\rho\geq 1$ such that for any $t\geq 1$ there is some $\tau\geq 1$ and some $C_2>0$ such that, for any $0\leq k\leq M$, $n\geq M$,
\[
\|e_{mn}\|_t\cdot\|e_{n-k}\|_r\leq C_2\|e_{mn}\|_{\tau}^{\frac{1}{m}}\cdot\|e_{mn-k}\|_{\rho}.
\]
\end{enumerate}
\end{definition}

\begin{lemma}\label{buildingblocks}
Let $(X,(\|\cdot\|_q)_q)$ be a Fr\'echet sequence space in which $(e_{n})_{n}$ is a basis with Property B, and let $B_{w}$ be a mixing weighted backward shift on $X$. Then, for any point
\[
y=\sum_{j=0}^{s}y_je_{j}\in \varphi,
\]
any $m\geq 1$, $r\geq 1$, $N\geq 0$ and $\varepsilon>0$ there are $\eta\geq N$, $\gamma> \eta+2s$, and complex numbers $c_0,\ldots,c_s$ and $b$ such that the point 
\begin{equation*}
p=q+b e_{\gamma}\quad\text{with}\quad
q=\sum_{j=0}^{s}c_{j}e_{\eta+j}
\end{equation*}
satisfies:
\begin{enumerate}[label=\textbf{\emph{C.\arabic*}}]
\item \label{cauchycond2} $\|p\|_r<\varepsilon$;
\item \label{cauchycond3} $mq\ast b^{m-1}e_{(m-1)\gamma}=F_{w^{-1}}^{\eta+(m-1)\gamma}y$;
\item \label{cauchycond4} $\|B_w^{\eta+(m-1)\gamma}(b^{m}e_{m\gamma})\|_r<\varepsilon$.
\end{enumerate}
\end{lemma}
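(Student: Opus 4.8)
The plan is to handle conditions C.2, then C.1 and C.3 in that order: C.2 will pin down the coefficients $c_0,\dots,c_s$ outright, so that only the two smallness requirements C.1 and C.3 remain, to be arranged by choosing the free parameters $\eta$, $\gamma$ and $b$. To solve C.2, write $L=\eta+(m-1)\gamma$ and note that, since $q=\sum_{j=0}^s c_je_{\eta+j}$ and $e_a\ast e_b=e_{a+b}$, the left-hand side $mq\ast b^{m-1}e_{(m-1)\gamma}$ is supported on the indices $\eta+j+(m-1)\gamma$, $0\le j\le s$, which is exactly the support of $F^{L}_{w^{-1}}y$. Using $F^{L}_{w^{-1}}e_j=\tfrac{v_j}{v_{j+L}}e_{j+L}$ and comparing coefficients forces
\[
c_j=\frac{y_j\,v_j}{m\,b^{m-1}\,v_{j+L}},\qquad 0\le j\le s,
\]
which is well defined as soon as $b\neq0$. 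With the $c_j$ defined this way C.2 holds identically, and it remains to make $\|q\|_r$, the tail $|b|\,\|e_\gamma\|_r$, and the quantity in C.3 small.

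The key structural decision is to keep the gap $\gamma-\eta$ bounded. I would fix $k_0:=\gamma-\eta=2s+1$ (so automatically $\gamma>\eta+2s$), put $M=k_0$, and let $\gamma\to\infty$; then $\eta=\gamma-k_0\ge N$ eventually. With this choice the indices align for condition (iii) of Property B: one has $\eta+j=\gamma-(k_0-j)$ and $j+L=m\gamma-(k_0-j)$, with the offsets $k_0-j$ all lying in $[0,M]$. Applying (iii) with $n=\gamma$, $k=k_0-j$ — after first fixing $\rho$, then choosing a large enough seminorm index $t$, and taking the resulting $\tau,C_2$ — converts
\[
\frac{\|e_{\eta+j}\|_r}{|v_{j+L}|}\ \le\ C_2\,\frac{\|e_{m\gamma}\|_\tau^{1/m}}{\|e_{m\gamma}\|_t}\cdot\frac{\|e_{m\gamma-(k_0-j)}\|_\rho}{|v_{m\gamma-(k_0-j)}|},
\]
where the final factor tends to $0$ by the mixing condition \eqref{eq:mix}. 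The tell-tale exponent $1/m$ on $\|e_{m\gamma}\|_\tau$ is precisely what is needed to balance the factor $b^{m-1}$ hidden in $c_j$ against the factor $b^{m}$ appearing in C.3; condition (i) of Property B is what guarantees the norms $\|e_{m\gamma}\|_t$ we divide by are positive.

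Next I would choose $b$. Since $B_w^{L}(b^m e_{m\gamma})=b^m\tfrac{v_{m\gamma}}{v_{k_0}}e_{k_0}$, setting $|b|^m=\theta/|v_{m\gamma}|$ for a fixed constant $\theta$ makes the quantity in C.3 equal to $\theta\,\|e_{k_0}\|_r/|v_{k_0}|$, which is below $\varepsilon$ for a suitable $\theta$ (and vanishes if $\|e_{k_0}\|_r=0$). The same choice gives $|b|\,\|e_\gamma\|_r=\theta^{1/m}\big(\|e_\gamma\|_r^{m}/|v_{m\gamma}|\big)^{1/m}$, which tends to $0$ once condition (ii) of Property B is iterated to yield $\|e_\gamma\|_r^{m}\le C\,\|e_{m\gamma}\|_{q'}$ and mixing is applied; this disposes of the tail part of C.1.

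The remaining estimate, bounding $\|q\|_r\le\sum_j|c_j|\,\|e_{\eta+j}\|_r$, is where I expect the main obstacle to lie. Substituting $|b|^{m-1}=\theta^{(m-1)/m}|v_{m\gamma}|^{-(m-1)/m}$ and the displayed consequence of (iii) into this sum, and raising to the $m$-th power, one is left with a constant multiple of
\[
\frac{|v_{m\gamma}|^{m-1}\,\|e_{m\gamma}\|_\tau}{\|e_{m\gamma}\|_t^{m}}\Big(\tfrac{\|e_{m\gamma-(k_0-j)}\|_\rho}{|v_{m\gamma-(k_0-j)}|}\Big)^{m},
\]
and the genuine difficulty is the mismatch between the full product $v_{m\gamma}$ coming from C.3 and the truncated products $v_{m\gamma-(k_0-j)}$ coming from the $c_j$. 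Reconciling them rigorously requires bounding the finitely many excess weights $w_{m\gamma-(k_0-j)+1}\cdots w_{m\gamma}$, which is available from the continuity of $B_w$ via $\|B_w^{\,k_0-j}e_{m\gamma}\|\le C\|e_{m\gamma}\|$, together with choosing $t$ large enough that the auxiliary factors $\|e_{m\gamma}\|_\rho^{m-1}\|e_{m\gamma}\|_\tau$ telescope into $\|e_{m\gamma}\|_t^{m}$. After these absorptions the whole expression collapses to a constant times the single mixing null-term $\|e_{m\gamma-(k_0-j)}\|_\rho/|v_{m\gamma-(k_0-j)}|\to0$, so $\|q\|_r\to0$ as well. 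Choosing $\gamma$ large enough that $\|q\|_r$ and $|b|\,\|e_\gamma\|_r$ are each below $\varepsilon/2$ then yields C.1, while C.2 and C.3 already hold, completing the construction of $p=q+be_\gamma$.
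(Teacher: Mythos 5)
Your proposal is correct and follows essentially the same route as the paper's proof: solve \emph{C.2} exactly for the $c_j$ in terms of $b$, keep the gap $\gamma-\eta$ bounded so that condition (iii) of Property B together with the continuity of the powers $B_w^k$ absorbs the mismatch between $v_{m\gamma}$ and $v_{m\gamma-(k_0-j)}$, and choose $|b|\sim |v_{m\gamma}|^{-1/m}$ to balance \emph{C.1} against \emph{C.3} (the paper instead fixes the gap as an $M>2s$ supplied by its mixing estimate and takes $b$ as a geometric mean of two expressions, but these are cosmetic differences). One bookkeeping remark: after applying the continuity estimate $\frac{|v_{m\gamma}|}{|v_{m\gamma-k}|}\|e_{m\gamma-k}\|_\rho\le C_5\|e_{m\gamma}\|_t$ to all $m$ factors, the null term surviving in your bound for $\|q\|_r^m$ is $\|e_{m\gamma}\|_\tau/|v_{m\gamma}|$ rather than $\|e_{m\gamma-(k_0-j)}\|_\rho/|v_{m\gamma-(k_0-j)}|$ (using it for only $m-1$ factors would leave an uncontrolled ratio $\|e_{m\gamma}\|_\tau/\|e_{m\gamma}\|_t$), but either way the expression tends to $0$ by mixing, so the argument goes through.
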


\begin{proof}
For $m=1$ the assertion is trivial. Indeed, take $b=0$ and $c_{j}=\frac{v_jy_j}{v_{\eta+j}}$ for $j=0,\ldots,s$. Then \emph{\ref{cauchycond3}} and  \emph{\ref{cauchycond4}} hold trivially. By \eqref{eq:mix} we may take $\eta\geq N$ so large that 
\[
\|p\|_r=\Big\|\sum_{j=0}^{s}\frac{v_jy_j}{v_{\eta+j}}e_{\eta+j}\Big\|_r<\varepsilon,
\]
hence \emph{\ref{cauchycond2}}. Finally choose any $\gamma> \eta+2s$.

Fix $m$ bigger than one. Let $b\in\mathbb{C}$ be given, where $b\neq 0$. Setting
\begin{equation}\label{eq:alph}
c_{j}=\frac{1}{mb^{m-1}}\frac{v_jy_j}{v_{\eta+j+(m-1)\gamma}}, \quad j=0,\ldots,s,
\end{equation}
we see that condition \emph{\ref{cauchycond3}} holds. Now let  $r\geq 1$, $N\geq 0$ and $\varepsilon\in (0,1]$. It remains to choose $\eta\geq N$, $\gamma>\eta+2s$ and $b$ so that \emph{\ref{cauchycond2}} and \emph{\ref{cauchycond4}} hold. 

 In condition (i) of Property B we may assume that $q=1$, so that $\|e_n\|_r>0$ for all $r\geq 1$ and $n\geq 0$. 

By condition (ii) of Property B, repeated $m-1$ times, there is some $q\geq r$ and some $C_3>0$ such that, for any $n,k\geq 0$,
\begin{equation}\label{eq:iinew}
\|e_n\|_r^{m-1}\cdot \|e_k\|_r\leq C_3 \|e_{(m-1)n+k}\|_q.
\end{equation}
Let
\[
C_4=\frac{1}{m}\sum_{j=0}^s |v_j||y_j|+1\quad\text{and}\quad \widetilde{\varepsilon}=\Big(\frac{\varepsilon}{2C_4}\Big)^2.
\]
In view of \eqref{eq:mix} there is some $M> 2s$ such that
\begin{equation}\label{eq:mix2}
\frac{1}{|v_{n}|}\|e_n\|_q <\min\{1, C_3^{-1}\}\widetilde{\varepsilon}^m
\end{equation}
whenever $n\geq M$. 

Next, let $\rho\geq 1$ be chosen according to condition (iii) of Property B. Since $B_w$ is continuous and
\[
B_w^ke_{m\gamma}= \frac{v_{m\gamma}}{v_{m\gamma-k}}e_{m\gamma-k},\quad \gamma\geq 0, k\leq m\gamma
\]
there is some $t\geq 1$ and $C_5>0$ such that
\[
 \frac{|v_{m\gamma}|}{|v_{m\gamma-k}|}\|e_{m\gamma-k}\|_\rho\leq C_5\|e_{m\gamma}\|_t,\quad k\leq M, \gamma\geq M.
\]

We now take $\tau\geq 1$ as in condition (iii) of Property B, which implies that
\[
\frac{|v_{m\gamma}|}{|v_{m\gamma-k}|}\|e_{\gamma-k}\|_r\leq C_2C_5\|e_{m\gamma}\|_{\tau}^\frac{1}{m},\quad k\leq M, \gamma\geq M.
\]
We deduce that
\[
\frac{|v_{m\gamma}|^{m-1}}{|v_{m\gamma-k}|^m}\|e_{\gamma-k}\|_r^m\leq (C_2C_5)^m\frac{1}{|v_{m\gamma}|}\|e_{m\gamma}\|_{\tau},\quad  k\leq M, \gamma\geq M,
\]
which tends to zero as $\gamma\to \infty$ by \eqref{eq:mix}. Thus there is some $\gamma\geq N+M$ so that with $\eta=\gamma-M$ and $j=0,\ldots,s$ we have that
\[
\frac{|v_{m\gamma}|^{m-1}}{|v_{\eta+j+(m-1)\gamma}|^m}\|e_{\eta+j}\|_r^m\leq 1
\]
and hence, in view of \eqref{eq:mix2} and the fact that $r\leq q$,
\begin{equation}\label{eq:lem1}
\frac{1}{|v_{\gamma-\eta}|^{\frac{1}{m}}}\|e_{\gamma-\eta}\|_r^{\frac{1}{m}}\frac{|v_{m\gamma}|^{\frac{1}{m}}}{|v_{\eta +j+(m-1)\gamma}|^{\frac{1}{m-1}}}\|e_{\eta+j}\|_r^{\frac{1}{m-1}} <\widetilde{\varepsilon}.
\end{equation}
Note that $\eta\geq N$ and $\gamma>\eta+2s$. 

From \eqref{eq:iinew} and \eqref{eq:mix2} we obtain that for these $\gamma$ and $\eta$ and any $j=0,\ldots,s$,
\[
\frac{1}{|v_{\eta+j+(m-1)\gamma}|}\|e_\gamma\|_r^{m-1}\|e_{\eta+j}\|_r \leq \frac{C_3}{|v_{\eta+j+(m-1)\gamma}|}\|e_{\eta+j+(m-1)\gamma}\|_q <\widetilde{\varepsilon}^m,
\]
hence
\begin{equation}\label{eq:lem2}
\frac{1}{|v_{\eta+j+(m-1)\gamma}|^{\frac{1}{m-1}}}\|e_\gamma\|_r\|e_{\eta+j}\|_r^{\frac{1}{m-1}} <\widetilde{\varepsilon}.
\end{equation}

So, let finally
\[
b = \Big(\max_{0\leq j\leq s} \frac{\|e_{\eta+j}\|_r^{\frac{1}{m-1}}}{|v_{\eta+j+(m-1)\gamma}|^{\frac{1}{m-1}}}\cdot
\min\Big\{\frac{1}{\|e_\gamma\|_r },
\frac{|v_{\gamma-\eta}|^{\frac{1}{m}}}{|v_{m\gamma}|^{\frac{1}{m}}\|e_{\gamma-\eta}\|_r^{\frac{1}{m}}}
\Big\}\Big)^{\frac{1}{2}},
\]
which is strictly positive. Then we have with \eqref{eq:alph}, \eqref{eq:lem1} and \eqref{eq:lem2}
\begin{align*}
\|q\|_r &\leq \sum_{j=0}^s |c_j|\|e_{\eta+j}\|_r = \frac{1}{mb^{m-1}}\sum_{j=0}^s |v_j||y_j|\frac{\|e_{\eta+j}\|_r}{|v_{\eta+j+(m-1)\gamma}|}\\
&\leq C_4 \frac{1}{b^{m-1}}\max_{0\leq j\leq s} \frac{\|e_{\eta+j}\|_r}{|v_{\eta+j+(m-1)\gamma}|}\\
&= C_4 \Big(\max_{0\leq j\leq s} \frac{\|e_{\eta+j}\|_r^{\frac{1}{m-1}}}{|v_{\eta+j+(m-1)\gamma}|^{\frac{1}{m-1}}}\cdot
\max\Big\{\|e_\gamma\|_r ,
\frac{|v_{m\gamma}|^{\frac{1}{m}}\|e_{\gamma-\eta}\|_r^{\frac{1}{m}}}{|v_{\gamma-\eta}|^{\frac{1}{m}}}
\Big\}\Big)^{\frac{m-1}{2}}\\
&<C_4\widetilde{\varepsilon}^{\frac{m-1}{2}}\leq \tfrac{\varepsilon}{2}.
\end{align*}
Moreover, \eqref{eq:lem2} implies that
\[
 \|b e_\gamma\|_r =b \|e_\gamma\|_r \leq \max_{0\leq j\leq s}\Big(\frac{1}{|v_{\eta+j+(m-1)\gamma}|^{\frac{1}{m-1}}}\|e_\gamma\|_r\|e_{\eta+j}\|_r^{\frac{1}{m-1}}\Big)^{\frac{1}{2}}<\widetilde{\varepsilon}^{\frac{1}{2}}\leq\tfrac{\varepsilon}{2}.
\]
Altogether we have that
\[
\|p\|_r\leq\|q\|_r+\|b e_\gamma\|_r<\varepsilon,
\]
so that \emph{\ref{cauchycond2}} holds. 

On the other hand, \eqref{eq:lem1} implies that
\begin{align*}
\|B_w^{\eta+(m-1)\gamma}(b^{m}e_{m\gamma})\|_r &=
\Big(b\frac{|v_{m\gamma}|^{\frac{1}{m}}\|e_{\gamma-\eta}\|_r^{\frac{1}{m}}}{|v_{\gamma-\eta}|^{\frac{1}{m}}}\Big)^m\\
&\leq \max_{0\leq j\leq s}\Big(\frac{\|e_{\eta+j}\|_r^{\frac{1}{m-1}}}{|v_{\eta+j+(m-1)\gamma}|^{\frac{1}{m-1}}}\frac{|v_{m\gamma}|^{\frac{1}{m}}\|e_{\gamma-\eta}\|_r^{\frac{1}{m}}}{|v_{\gamma-\eta}|^{\frac{1}{m}}}\Big)^{\frac{m}{2}}<\widetilde{\varepsilon}^{\frac{m}{2}}\leq\varepsilon.
\end{align*}
Thus, \emph{\ref{cauchycond4}} hold as well.
\end{proof}

\begin{remark}\label{remomega}
For the space $\omega$, the sequence $(e_n)_n$ does not have Property B because it does not satisfy condition (i). However, the conclusion of 
Lemma \ref{buildingblocks} holds trivially by choosing $\eta$ and $\gamma$ so large that the value of the seminorms in \emph{\ref{cauchycond2}} and \emph{\ref{cauchycond4}} is zero. Since the remaining results of this section only rely on this conclusion, they also hold for all (not necessarily mixing) weighted backward shifts on $\omega$.
\end{remark}

We can now obtain the analogue of Theorem \ref{thrm:algebra1} for Fr\'echet algebras defined by Cauchy products.
 
\begin{theorem}
\label{thrm:algebra1cauchy}
Let $(X,(\|\cdot\|_q)_q)$ be a Fr\'echet sequence algebra under the Cauchy product in which $(e_{n})_{n}$ is a basis with Property B, and let $B_{w}$ be a mixing weighted backward shift on $X$. Then there exists a point $x\in HC(B_{w})$ such that the algebra generated by $x$, except zero, is contained in $HC(B_{w})$.
\end{theorem}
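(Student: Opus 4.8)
The plan is to run the argument of Theorem \ref{thrm:algebra1} almost verbatim, but with the ``diagonal'' building blocks $(S^{a_r}y^{(l)})^{\frac1m}$ replaced by the two-scale blocks $p_r=q_r+b_re_{\gamma_r}$ produced by Lemma \ref{buildingblocks}. The whole point of these blocks is that the Cauchy expansion of $p_r^m$ contains the monomial $\binom{m}{m-1}q_r\ast(b_re_{\gamma_r})^{m-1}=m\,q_r\ast b_r^{m-1}e_{(m-1)\gamma_r}$, which by \ref{cauchycond3} equals $F_{w^{-1}}^{a_r}y^{(l)}$ with $a_r=\eta_r+(m-1)\gamma_r$; hence
\[
B_w^{a_r}p_r^m=y^{(l)}+B_w^{a_r}(b_r^m e_{m\gamma_r})+(\text{monomials annihilated by }B_w^{a_r}),
\]
and the leftover spike $B_w^{a_r}(b_r^m e_{m\gamma_r})$ is small by \ref{cauchycond4}. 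Thus each $p_r$ plays exactly the role that $(S^{a_r}y^{(l)})^{\frac1m}$ played before, and $B_w^{a_r}p_r^m\approx y^{(l)}$.

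So first I would enumerate $r=(m,l)\in\mathbb N\times\mathbb N$, fix a dense sequence $(y^{(l)})_l\subset\varphi$ of non-zero points in which each value recurs infinitely often, and construct the blocks $p_r$ by repeatedly invoking Lemma \ref{buildingblocks}, arranging at stage $r$ that: (i) $\|p_r\|_r<2^{-r}$ (from \ref{cauchycond2}), so that $x:=\sum_r p_r$ converges in $X$; (ii) the supports are strongly separated, with $\eta_r,\gamma_r$ growing rapidly and $\gamma_r\gg\eta_r\gg\gamma_{r-1}$; and (iii) a tail estimate of the form $\|B_w^{a_t}(p_r^k\ast x_{<r}^{\nu-k})\|_r<2^{-r}$ holds for every earlier target $t<r$, every $1\le k\le\nu$ and every $\nu\le d_r$, where $x_{<r}:=\sum_{r'<r}p_{r'}$ is already fixed and $d_r=\max_{(\widetilde m,\widetilde l)<r}\widetilde m$. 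Condition (iii) is the Cauchy analogue of \ref{condition2}, controlling the contribution of the later blocks; its feasibility rests on the smallness of the relevant seminorms of $p_r$ and on the continuity of $B_w^{a_t}$ and of the Cauchy product.

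To check $x^j\in HC(B_w)$, fix a target $t=(j,l_0)$ and expand $B_w^{a_t}x^j$ by grouping the Cauchy monomials according to their largest block index $R$. The strong separation (ii) makes every monomial with $R\le t$ other than the pure power $p_t^j$ supported strictly below $a_t$, so $B_w^{a_t}$ kills it; the monomial $p_t^j$ contributes $y^{(l_0)}$ up to the small remainder of \ref{cauchycond4}; and the monomials with $R>t$ are small by (iii). Letting $t$ run through the infinitely many repetitions $(j,l_i)$ of $l_0$ and using density of $(y^{(l)})_l$ yields $x^j\in HC(B_w)$. The same scheme then handles an arbitrary $z=\sum_{\nu=j}^N c_\nu x^\nu$ with $c_j=1$: its lowest homogeneous part $x^j$ again produces $y^{(l_0)}$, and one must show $B_w^{a_t}x^\nu\to0$ for each $\nu>j$. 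Just as in \eqref{eq:2}--\eqref{eq:5}, the monomials built from blocks $<t$ are killed by separation and those involving a block $>t$ are small by (iii); what survives are the ``diagonal'' monomials $B_w^{a_t}(p_t^k\ast x_{<t}^{\nu-k})$ that use the target block $p_t$ at the wrong power $\nu$, and these must be driven to $0$ along $t=(j,l_i)$ using the mixing condition \eqref{eq:mix} together with the quantitative estimates of Lemma \ref{buildingblocks}.

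The main obstacle is exactly this last point, together with the bookkeeping of the Cauchy cross terms that has no counterpart in Theorem \ref{thrm:algebra1}: there the pairwise disjointness of the supports collapsed $x^\nu$ to its diagonal $\sum_r(S^{a_r}y^{(l)})^{\frac\nu m}$, whereas the convolution here manufactures genuine mixed monomials. The two-scale design of $p_r$ (a low cluster near $\eta_r$ and a single far-out spike at $\gamma_r$) is precisely what lets one sort these monomials by support and isolate the few survivors; verifying that the diagonal-wrong-power monomials tend to $0$ under the mixing hypothesis alone---rather than the stronger condition $|v_n|\to\infty$ available in the coordinatewise setting---is where Property B and the estimates inside Lemma \ref{buildingblocks} do the real work.
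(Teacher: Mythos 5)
Your construction of $x=\sum_r p_r$ from the blocks of Lemma \ref{buildingblocks}, the convergence via \ref{cauchycond2}, the support separation, the tail estimate (your condition (iii), the analogue of the paper's \ref{Cs4}), and the verification that each pure power $x^j$ is hypercyclic all match the paper's argument. The genuine gap is in your treatment of a general element $z=\sum_{\nu=j}^N c_\nu x^\nu$: you normalize the \emph{lowest} coefficient $c_j=1$, aim $B_w^{a_t}x^j$ at $y^{(l_0)}$, and then must show $B_w^{a_t}x^\nu\to 0$ for $\nu>j$ --- exactly as in Theorem \ref{thrm:algebra1}. You correctly identify the surviving ``diagonal wrong-power'' monomials (those whose highest block is the target block $p_t$, taken to total degree $\nu>j$) as the obstacle, but your proposed resolution --- driving them to $0$ using mixing and the estimates of Lemma \ref{buildingblocks} --- does not work. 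For instance $p_t^\nu$ contains the term $b_t^\nu e_{\nu\gamma_t}$ and the cross terms $\binom{\nu}{k}q_t^{\nu-k}\ast b_t^k e_{k\gamma_t}$ with $k\geq j$, all supported beyond $a_t=\eta_t+(j-1)\gamma_t$; Lemma \ref{buildingblocks} only controls $B_w^{a_t}(b_t^{j}e_{j\gamma_t})$ (condition \ref{cauchycond4}), and the coefficients $d_j\sim b_t^{-(j-1)}v_j y_j/v_{a_t+j}$ of $q_t$ are calibrated against that one power only. Under the mere mixing hypothesis \eqref{eq:mix} there is no analogue of the quantity $|v_{n+a_t}|^{\nu/j-1}\to\infty$ that killed the higher powers in the coordinatewise proof, so these terms are simply not small.

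The paper's resolution is to reverse the normalization: take $c_m=1$ for the \emph{highest} degree $m$, target $r=(m,l)$ with $a_r=\eta+(m-1)\gamma$, and observe that every monomial $P^\alpha$ of total degree $\mu<m$ built from blocks $p_1,\dots,p_r$ has support contained in $[0,\mu\gamma]\subset[0,(m-1)\gamma]$, hence is \emph{exactly annihilated} by $T^{a_r}$ on support grounds (condition \ref{Cs2}); the same holds for the degree-$m$ monomials other than $p_r^m$ whose blocks do not all equal $p_r$. So no wrong-power term ever needs to be estimated: the lower homogeneous parts vanish identically after applying $T^{a_r}$, the term $p_r^m$ produces $y^{(l)}$ via \ref{cauchycond3}--\ref{cauchycond4}, and only the tails over blocks of index $>r$ remain, which your condition (iii) already handles. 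Without this switch from bottom-degree to top-degree normalization your final step fails, and with it the rest of your outline goes through.
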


\begin{proof}
To simplify our notation we will denote, as before, by $T$ the weighted backward shift operator $B_{w}$ on $X$ and by $S$ the weighted forward shift operator $F_{w^{-1}}$. Recall that $TS=I$ on $\omega$.
 
As in the proof of Theorem \ref{thrm:algebra1} we fix a correspondence $r=(m,l)$ between $\mathbb{N}$ and $\mathbb{N}\times\mathbb{N}$, and we fix a dense sequence $(y^{(l)})_l\subset \varphi$ of non-zero points in $X$. 

We define a partition of the set of all non-zero multi-indices by setting 
\begin{equation*}
I_{m,t}=\{\alpha\in\mathbb N_0^{t}:\vert \alpha\vert=m, \alpha_t>0\},\quad m,t\geq 1, 
\end{equation*}
where $|\alpha|=\sum_{j=1}^t \alpha_j$. Given $\alpha\in I_{m,t}$ and $p_1,\ldots,p_t\in X$ we will write 
$$
P^{\alpha}=p_{1}^{\alpha_1}\ast\cdots\ast p_{t}^{\alpha_t};
$$
and 
\[
\binom{m}{\alpha}= \frac{m!}{\alpha_1!\cdots \alpha_t!}
\]
denotes the corresponding multinomial coefficient.

Let us now construct an increasing sequence $(a_{r})_{r\geq 0}$ of natural numbers and a sequence $(p_{r})_{r\geq 0}$ in $\varphi$ satisfying that, if $r=(m,l)\in \mathbb N$, then

\begin{enumerate}[label=\textbf{D.\arabic*}]
\item \label{Cs1} $\Vert p_{r}\Vert_r<2^{-r},$
\item \label{Cs2} $T^{a_{r}}P^{\alpha}=0$  for all $\alpha\in I_{\mu,t}$, $1\leq \mu<m$, $1\leq t\leq r$, or $\mu=m$, $1\leq t< r$, and for all $\alpha\in I_{m,r}$, $\alpha\neq (0,\ldots, 0,m)$,
\item \label{Cs3} $\Vert T^{a_{r}}p_{r}^{m}-y^{(l)}\Vert_{r} < 2^{-r}$,
\item \label{Cs4} $\sum_{\alpha\in I_{\mu, r}} \binom{\mu}{\alpha}\Vert T^{a_{t}}P^{\alpha}\Vert_{r}< 2^{-r}$  for $1\leq t<r$ and $1\leq \mu\leq \widetilde{m}$, where $t=(\widetilde{m},\widetilde{l})$. 
\end{enumerate}
We proceed by induction on $r\geq 0$. For $r=0$ we set $a_0=1$ and $p_0=e_0$; there is nothing else to do. 

Let $r\geq 1$, and assume that we have constructed natural numbers $a_0 < a_{1}<\ldots< a_{r-1}$ and points $p_{0},\ldots, p_{r-1}$ in $\varphi$ satisfying conditions \ref{Cs1}, \ref{Cs2}, \ref{Cs3} and \ref{Cs4}. 

Consider $r=(m,l)$. Let $\varepsilon\leq 2^{-r}$ be a positive number and $\rho\geq r$ an integer, both to be specified later; write
\[
y^{(l)} = \sum_{j=0}^{s_l} y_j^{(l)} e_j.
\]
Let $N$ be the largest index of the non-zero coordinates in any of the points $p_{0},\ldots, p_{r-1}$. 
By Lemma \ref{buildingblocks} there are $\eta > \max\{N,a_{r-1}\}$, $\gamma> \eta+2s_l$ and complex numbers $d_0,\ldots,d_{s_l}$ and $b$ such that the point 
\[
p=q+b e_{\gamma}\quad\text{with}\quad q=\sum_{j=0}^{s_l}d_j e_{\eta+j}
\]
satisfies
\begin{enumerate}[label=\textbf{E.\arabic*}]
\item \label{conda} $\|p\|_\rho<\varepsilon$;
\item \label{condb} $mq\ast b^{m-1}e_{(m-1)\gamma}=S^{\eta+(m-1)\gamma}y^{(l)}$;
\item \label{condc} $\|T^{\eta+(m-1)\gamma}(b^{m}e_{m\gamma})\|_r<2^{-r}$.
\end{enumerate}
We define
\[
p_r=p, \quad a_r=\eta+(m-1)\gamma.
\]
Then $a_r\geq \eta > a_{r-1}$. Moreover, \ref{conda} implies condition \ref{Cs1} since $\rho\geq r$ and $\varepsilon\leq 2^{-r}$. 

Let $\alpha\in I_{\mu,t}$, $1\leq \mu\leq m$, $1\leq t\leq r$. If $\mu<m$ then the largest index of the non-zero coordinates of $P^\alpha$ is at most $(m-1)
\gamma<a_r$ (note that $N\leq \gamma$); now, if $\mu=m$ and  $t<r$ then this index as at most $mN=N+(m-1)N< \eta+(m-1)\gamma=a_r$; if $\mu=m$, $t=r$ and $\alpha\neq (0,\ldots,0,m)$ then this index is at most $N+(m-1)\gamma< \eta+(m-1)\gamma=a_r$. Thus, in any case, we have that $T^{a_r}P^\alpha=0$, hence \ref{Cs2}. 

Next, we have that
\begin{align*}
p_{r}^{m} &=\sum_{k=0}^{m}\tbinom{m}{k}q^{m-k}\ast(b e_\gamma)^{k}\\
&=\sum_{k=0}^{m-2}\tbinom{m}{k}q^{m-k}\ast(b e_\gamma)^{k} + mq\ast b^{m-1} e_{(m-1)\gamma} + b^me_{m\gamma}.
\end{align*}
The largest index of the non-zero coordinates of the first sum is at most
\[
2(\eta+s_l)+(m-2)\gamma = \eta+(\eta+2s_l) +(m-2)\gamma < \eta +(m-1)\gamma=a_r,
\]
so that $T^{a_r}$ sends the sum to 0. Hence 
\[
T^{a_r} p_r^m =  T^{a_r}(mq\ast b^{m-1} e_{(m-1)\gamma}) + T^{a_r}(b^me_{m\gamma})= y^{(l)}+ T^{a_r}(b^me_{m\gamma}),
\]
where we have applied \ref{condb} and the fact that $TS=I$. Thus, \ref{condc} implies condition \ref{Cs3}.

Finally, condition \ref{Cs4} consists of a finite number of inequalities (in fact, for $r=1$ the condition is empty). Now, if $\alpha\in I_{\mu,r}$, 
then $P^\alpha$ is of the form
\[
p_1^{\alpha_1}\ast\cdots\ast p_r^{\alpha_r}
\]
with $\alpha_r\neq 0$. Since $p_1,\ldots, p_{r-1}$ are known and both the Cauchy product and the operator $T$ are continuous on $X$, there exist $\rho\geq r$ and $\varepsilon\leq 2^{-r}$ such that all the inequalities in \ref{Cs4} are satisfied as soon as $\|p_r\|_\rho<\varepsilon$. We choose $\rho$ and $\varepsilon$ so that these inequalities hold.

This completes the induction process. Consider now
\begin{equation*}
x=\sum_{r=1}^{\infty}p_{r}.
\end{equation*}
As a consequence of \ref{Cs1}, the series converges and $x\in X$. We claim that the algebra generated by $x$ is contained in $HC(T)$, except for zero. Thus let
\[
z=\sum_{\mu=1}^{m}c_{\mu}x^{\mu}
\]
with $c_{1},\ldots,c_{m}\in \mathbb C$ and $c_{m}\ne 0$. We may assume that $c_{m}=1$.

Let $l\geq 1$. Since 
\[
x^m = \sum_{t=1}^\infty \sum_{\alpha\in I_{m,t}}\tbinom{m}{\alpha} P^\alpha
\]
we have for $r=(m,l)$ that, in view of condition \ref{Cs2},
\begin{align*}
T^{a_{r}}x^{m}-y^{(l)}
&= \sum_{t<r} \sum_{\alpha\in I_{m,t}}\tbinom{m}{\alpha} T^{a_{r}}P^\alpha + \sum_{\substack{\alpha\in I_{m,r}\\ \alpha\neq (0,\ldots,0,m)}}\tbinom{m}{\alpha} T^{a_{r}}P^\alpha\\
&\phantom{xxxxxxxxxxxxx} + T^{a_{r}}p_{r}^{m}-y^{(l)} + \sum_{t>r} \sum_{\alpha\in I_{m,t}}\tbinom{m}{\alpha} T^{a_{r}}P^\alpha\\
&= T^{a_{r}}p_{r}^{m}-y^{(l)} + \sum_{t>r} \sum_{\alpha\in I_{m,t}}\tbinom{m}{\alpha} T^{a_{r}}P^\alpha, 
\end{align*}
and hence
\begin{equation*}\label{degreem}
\begin{split}\|T^{a_{r}}x^{m}-y^{(l)}\|_r &< 2^{-r}+\sum_{t>r} \sum_{\alpha\in I_{m,t}}\tbinom{m}{\alpha}\| T^{a_{r}}P^\alpha\|_r\quad\text{(by condition \ref{Cs3})}\\
 &< 2^{-r}+ \sum_{t>r}2^{-t}\quad\text{(by condition \ref{Cs4})}\\
 &= 2^{-r+1}.
\end{split}
\end{equation*}
In the same way we obtain by conditions \ref{Cs2} and \ref{Cs4} for $\mu<m$
\begin{equation*}
\label{goestozero}
\|T^{a_{r}}x^{\mu}\|_r \leq\sum_{t>r} \sum_{\alpha\in I_{\mu,t}}\tbinom{\mu}{\alpha}\|T^{a_{r}}P^\alpha\|_r< 2^{-r}.
 \end{equation*}
Altogether we have that
\[
\|T^{a_{r}}z-y^{(l)}\|_r < \sum_{\mu=1}^{m-1}|c_\mu|2^{-r} + 2^{-r+1} = \Big(\sum_{\mu=1}^{m-1}|c_\mu|+2\Big)2^{-r}.
\]
By the density of the sequence $(y^{(l)})_{l}$ the result follows and the proof is complete.
\end{proof}

We next want to show that the set $HC(B_w)$ is even algebrable. Thus we need to pass from an algebra generated by a single point $x$ to one generated by infinitely many points $x^{(k)}$, $k\geq 1$. The building blocks will be essentially the same points $p_r$ as in the previous proof. However, we need to ensure that the algebra generated by the $x^{(k)}$ is not finitely generated. This can be achieved by choosing suitable coefficients for the $p_r$.

\begin{theorem}
\label{thrm:algebraseveralcauchy}
Let $(X,(\|\cdot\|_q)_q)$ be a Fr\'echet sequence algebra under the Cauchy product in which $(e_{n})_{n}$ is a basis with Property B, and let $B_{w}$ be a mixing weighted backward shift on $X$. Then $HC(B_{w})$ contains an algebra, except zero, that is not finitely generated.
In other words, $HC(B_w)$ is algebrable.
\end{theorem}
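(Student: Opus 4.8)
The plan is to adapt the single-generator construction of Theorem \ref{thrm:algebra1cauchy} to a countable family of generators, in exactly the way Theorem \ref{genresl} refined Theorem \ref{thrm:algebra1} in the coordinatewise setting. First I would fix a partition $\mathbb{N}=\bigcup_{k\geq 1}\mathbb{N}_k$ of the index set into infinitely many infinite pieces, arranged so that the subsequence $(y^{(l)})_{l\in\mathbb{N}_k}$ is dense in $X$ for each $k$ (and so that each value $y^{(l_0)}$ recurs infinitely often within its block). The building blocks $p_r$ are produced by Lemma \ref{buildingblocks} precisely as before, satisfying the analogues of conditions \ref{Cs1}--\ref{Cs4}; I would then define, for each $k\geq 1$,
\[
x^{(k)}=\sum_{r=(m,l),\ l\in\mathbb{N}_k} p_r,
\]
so that each $x^{(k)}\in X$ by the summability coming from \ref{Cs1}. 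The algebra $\mathcal{A}$ generated by the family $(x^{(k)})_k$ is our candidate.

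The key structural difference from the coordinatewise case is that Cauchy products of the $x^{(k)}$ do \emph{not} vanish when $k\neq k'$, so I cannot rely on a relation like \eqref{eq:20} to split $z$ into homogeneous pieces supported on a single generator. Instead I would argue as follows. A general non-zero $z\in\mathcal{A}$ can be written as a finite sum $z=\sum_{\mu=j}^{N}Q_\mu$, where $Q_\mu$ collects the $\mu$-homogeneous terms (each a linear combination of Cauchy products $(x^{(k_1)})^{\beta_1}\ast\cdots\ast(x^{(k_i)})^{\beta_i}$ with $|\beta|=\mu$) and $j$ is the smallest degree with $Q_j\neq 0$. Writing each $x^{(k)}=\sum p_r$, the product $Q_j$ expands into a (multinomial-weighted) sum of Cauchy products $P^\alpha$ of the $p_r$. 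Applying $T^{a_t}$ for $t=(j,l_0)$ with $l_0$ lying in a block appropriate to a nonvanishing coefficient of $Q_j$, conditions \ref{Cs2} and \ref{Cs4} again kill all terms except the single diagonal term $p_t^{\,j}$, for which \ref{Cs3} gives $T^{a_t}p_t^{\,j}\to y^{(l_0)}$. The higher-degree parts $Q_\mu$, $\mu>j$, are sent to $0$ by the same \ref{Cs2}/\ref{Cs4} estimates. Thus $T^{a_t}z\to y^{(l_0)}$ along $t=(j,l_i)$ with $l_i\in$ the chosen block, and density of $(y^{(l)})_{l\in\mathbb{N}_{k'}}$ forces $z\in HC(T)$.

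The main obstacle — and the reason this is not a verbatim copy of Theorem \ref{genresl} — is proving that $\mathcal{A}$ is \emph{not finitely generated}, since the clean product-annihilation identity \eqref{eq:20} is unavailable under the Cauchy product. To handle this I would introduce free scalar parameters into the building blocks: replace $p_r$ by $\lambda_r p_r$ for carefully chosen scalars (equivalently, allow Lemma \ref{buildingblocks} to supply a one-parameter family, which it does since the coefficients $c_j$ and $b$ scale freely), and arrange that the leading-degree coefficients of the $x^{(k)}$ are algebraically independent or grow fast enough that no finite subfamily $x^{(1)},\dots,x^{(s)}$ can generate all of $\mathcal{A}$. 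Concretely, a point generated by $x^{(1)},\dots,x^{(s)}$ has its lowest-degree homogeneous part lying in the span of $\{(x^{(1)})^{\mu},\dots,(x^{(s)})^{\mu}\}$-type products with $\mu\geq 1$, and by exhibiting for each $s$ an element (say a single generator $x^{(s+1)}$) whose leading behaviour cannot be matched, one contradicts finite generation. I expect the delicate point to be making this independence argument interact correctly with the degree-filtration and the recurrence/density bookkeeping, so that finite generation genuinely fails while every non-zero element remains hypercyclic; this is where I would spend the bulk of the effort, and I would verify that the chosen scalars do not disturb conditions \ref{Cs1}--\ref{Cs4}, which hold uniformly once $\|p_r\|_\rho$ is small.
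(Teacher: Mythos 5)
There are two genuine gaps. First, your definition of the generators from \emph{disjoint} blocks of building blocks, $x^{(k)}=\sum_{l\in\mathbb{N}_k}p_{(m,l)}$, does not survive the passage to the Cauchy product. The single-generator argument works because, after applying $T^{a_r}$ with $r=(m,l)$, the only surviving top-degree contribution is the pure diagonal term $p_r^m$ (condition \ref{Cs2} annihilates everything else of degree $\leq m$ with indices $\leq r$, condition \ref{Cs4} controls the tail), and \ref{Cs3} sends $p_r^m$ close to $y^{(l)}$. But with disjoint blocks a mixed element such as $z=x^{(1)}\ast x^{(2)}$ contains \emph{no} term $p_r^m$ at all: its degree-two expansion consists exclusively of cross products $p_r\ast p_{r'}$ with $r\neq r'$, all of which are killed or made small by \ref{Cs2} and \ref{Cs4}, so $T^{a_r}z\to 0$ along your sequence and no density of the orbit can be extracted. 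This is precisely why the paper makes \emph{every} generator a series over \emph{all} building blocks, $x^{(k)}=\sum_r\lambda_{k,\nu_r}p_r$, with scalars taken from a matrix $\Lambda$ whose columns run densely through the finite sequences of norm at most $1$ in $\ell^\infty$: the coefficient of $p_r^m$ in the top-degree part of $z$ is then $\rho=\sum_{|\beta|=m}c_\beta\lambda_{1,\nu}^{\beta_1}\cdots\lambda_{s,\nu}^{\beta_s}$, which can be made nonzero by choosing the column, and one proves $T^{a_r}z\to\rho y^{(l)}$, invoking density of $(\rho y^{(l)})_l$.

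Second, you target the \emph{lowest} nonvanishing homogeneous degree $j$, as in Theorem \ref{genresl}; under the Cauchy product one must target the \emph{highest} degree $m$. Conditions \ref{Cs2} and \ref{Cs4} at time $a_t$ with $t=(\widetilde m,\widetilde l)$ only control $P^\alpha$ of degree $\mu\leq\widetilde m$, so nothing controls $T^{a_t}Q_\mu$ for $\mu>j$ when $t=(j,l_0)$. Concretely, $p_t^{\mu}$ for $\mu>j$ contains the monomial $b^{\mu}e_{\mu\gamma}$, supported far above $a_t=\eta+(j-1)\gamma$, and the constant $b$ in Lemma \ref{buildingblocks} is calibrated to make $\|B_w^{\eta+(m-1)\gamma}(b^{m}e_{m\gamma})\|_r$ small only for the one exponent $m$ attached to $t$; there is no reason for $T^{a_t}(b^{\mu}e_{\mu\gamma})$ to be small when $\mu>j$. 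By contrast, every term of degree $\mu<m$ is supported strictly below $a_r$ and is literally annihilated by $T^{a_r}$, which is why the highest degree is the right target. Finally, your non-finite-generation argument is only a statement of intent: the paper's concrete mechanism (inspecting the coordinate of index $m\gamma_r$ to force $m=1$, then contradicting $x^{(s+1)}=\sum_k c_k x^{(k)}$ via a column $\nu$ with $\sum_k c_k\lambda_{k,\nu}\neq\lambda_{s+1,\nu}$) is not supplied by an appeal to ``algebraic independence of leading coefficients''.
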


\begin{proof} The proof of Theorem \ref{thrm:algebra1cauchy} will be modified in certain ways. We write again $T=B_w$, and we let $(y^{(l)})_l\subset \varphi$ be a dense sequence of non-zero points in $X$. 

We will here identify $\mathbb{N}$ with $\mathbb{N}^3$, so that we write $r=(m,l,\nu)\in \mathbb{N}$ with $m,l,\nu\geq 1$. Moreover, let $A$ be a countable dense subset of the set of finite sequences of norm at most 1 in $\ell^\infty(\mathbb{N})$. Let $\Lambda=(\lambda_{k,\nu})_{k,\nu\geq 1}$ be a matrix so that each column belongs to $A$, and each element of $A$ appears infinitely often as a column. 

Let $I_{m,t}$, $m,t\geq 1$, and $P^{\alpha}$, $\alpha\in I_{m,t}$, be defined as in the proof of Theorem \ref{thrm:algebra1cauchy}. Following that proof we can then construct an increasing sequence $(a_{r})_{r\geq 1}$ of natural numbers and a sequence $(p_r)_{r\geq 1}$ in $\varphi$ such that, if $r=(m,l,\nu)\in \mathbb N$, then
\begin{enumerate}[label=\textbf{F.\arabic*}]
\item \label{Ds1} $\Vert p_{r}\Vert_r<2^{-r},$
\item \label{Ds2} $T^{a_{r}}P^{\alpha}=0$  for all $\alpha\in I_{\mu,t}$, $1\leq \mu<m$, $1\leq t\leq r$, or $\mu=m$, $1\leq t< r$, and all $\alpha\in I_{m,r}$, $\alpha\neq (0,\ldots, 0,m)$,
\item \label{Ds3} $\Vert T^{a_{r}}p_{r}^{m}-y^{(l)}\Vert_{r} < 2^{-r}$,
\item \label{Ds4} $\Vert T^{a_{t}}P^{\alpha}\Vert_r< 2^{-r}$  for all $\alpha\in I_{\mu,r}$ with $1\leq t<r$ and $1\leq\mu\leq \widetilde{m}$, where $t=(\widetilde{m},\widetilde{l},\widetilde{\nu})$.  
\end{enumerate}
We may achieve, in addition, that if $p_r=\sum_{j=\eta_r}^{\gamma_r}d_je_j$ with $d_{\gamma_r}\neq 0$ then $a_{r}\leq m\gamma_{r}<\eta_{r+1}$, where $r=(m,l,\nu)$.

We now define, for any $k\geq 1$, 
\begin{equation}\label{eq:avoid0}
x^{(k)} = \sum_{r=1}^{\infty}\lambda_{k,\nu_r} p_{r}.
\end{equation}
Since the elements of the matrix $\Lambda$ are bounded (by 1), \ref{Ds1} implies that these series converge, so that $x^{(k)}\in X$, $k\geq 1$. 

Let $\mathcal{A}$ be the algebra generated by the points $x^{(k)}$, $k\geq 1$. We first show that any non-zero point $z\in\mathcal{A}$ is hypercyclic for $T$. We can write
\[
z=\sum_{\substack{\beta\in I \subset \mathbb{N}_0^s\\ \beta\neq 0}}c_{\beta}(x^{(1)})^{\beta_{1}}\ast\cdots\ast (x^{(s)})^{\beta_{s}}
\]
for some $s\geq 1$ and $I$ finite. Let
\[
m= \max\{|\beta| : c_\beta\neq 0\}.
\]
Thus
\[
z= \sum_{\mu=1}^m \sum_{|\beta|=\mu}c_{\beta}(x^{(1)})^{\beta_{1}}\ast\cdots\ast (x^{(s)})^{\beta_{s}}.
\]

One reason for introducing the $\lambda_{k,\nu}$ in \eqref{eq:avoid0} is that one cannot be sure that $\sum_{|\beta|=m}c_{\beta}\neq 0$. But since the polynomial $P(a_1,\ldots,a_s)= \sum_{|\beta|=m}c_{\beta}a_1^{\beta_{1}}\cdots a_s^{\beta_{s}}$ is non-zero and since the first $s$ coordinates of the elements of $A$ are dense in the polydisk of $\mathbb{C}^s$, there is an element $a=(a_n)_n\in A$ such that 
\[
\sum_{|\beta|=m}c_{\beta}a_1^{\beta_{1}}\cdots a_s^{\beta_{s}}=:\rho\neq 0.
\]

Now, in order to show that $z$ is hypercyclic, let $l\geq 1$. By the definition of the matrix $\Lambda$ there is some $\nu\geq 1$, arbitrarily large, such that
\[
\lambda_{k,\nu}=a_k,\quad k=1,\ldots,s.
\]
Let $r=(m,l,\nu)$, which can be made arbitrarily large by choosing $\nu$ large. 

After expansion, taking account of the continuity of the Cauchy product, we see that there are complex numbers $d_{\alpha}$, $\alpha\in I_{m,t}$, $t\geq 1$, such that
\begin{equation}\label{eq:exp}
\begin{split}
\sum_{|\beta|=m}c_{\beta}(x^{(1)})^{\beta_{1}}\ast\cdots\ast (x^{(s)})^{\beta_{s}}&=\sum_{t=1}^{r-1}\sum_{\alpha\in I_{m,t}} d_\alpha P^\alpha + \sum_{\substack{\alpha\in I_{m,r}\\\alpha\neq (0,\ldots,0,m)}} d_{\alpha} P^{\alpha}\\ &\phantom{xxxxxxxxxxxxxx}+ \rho p_r^m + \sum_{t>r}\sum_{\alpha\in I_{m,t}} d_{\alpha} P^{\alpha};
\end{split}
\end{equation}
note that the coefficient of $p_r^m$ is
\[
d_{(0,\ldots,0,m)}=\sum_{|\beta|=m}c_{\beta}\lambda_{1,\nu_r}^{\beta_{1}}\cdots \lambda_{s,\nu_r}^{\beta_{s}}=\sum_{|\beta|=m}c_{\beta}a_1^{\beta_{1}}\cdots a_s^{\beta_{s}} = \rho
\]
since $\nu_r=\nu$.

Let 
\[
C_\mu= (1+\mu)^s\max_{|\beta|=\mu}|c_\beta|,\quad 1\leq \mu\leq m.
\]
Now, each $P^\alpha$ comes from one of the $t^m$ terms without any power of $p_{t+1}, p_{t+2},\ldots$ in the expansion of $(x^{(1)})^{\beta_{1}}\ast\cdots\ast (x^{(s)})^{\beta_{s}}$, and there are at most $(1+m)^s$ choices of $\beta$ with $|\beta|=m$; moreover, the elements of $\Lambda$ are bounded by 1. Altogether we obtain as a very rough estimate that
\begin{equation}\label{eq:delta}
|d_{\alpha}|\leq C_{m} t^m,\quad \alpha\in I_{m,t}, t\geq 1.
\end{equation}

It follows from \eqref{eq:exp} with condition \ref{Ds2} that
\[
T^{a_r}\Big(\sum_{|\beta|=m}c_{\beta}(x^{(1)})^{\beta_{1}}\ast\cdots\ast (x^{(s)})^{\beta_{s}}\Big)  = \rho T^{a_r}p_r^m + \sum_{t>r}\sum_{\alpha\in I_{m,t}} d_{\alpha} T^{a_r} P^{\alpha},
\] 
and therefore, by conditions \ref{Ds3} and \ref{Ds4} with \eqref{eq:delta},
\[
\Big\|T^{a_r}\Big(\sum_{|\beta|=m}c_{\beta}(x^{(1)})^{\beta_{1}}\ast\cdots\ast (x^{(s)})^{\beta_{s}}\Big) - \rho  y^{(l)}\Big\|_r < \rho 2^{-r} + \sum_{t>r}\text{card} (I_{m,t})C_m t^m 2^{-t}.
\] 

In the same way, for $1\leq \mu <m$, there are complex numbers $d_{\alpha}$, $\alpha\in I_{\mu,t}$, $t\geq 1$, such that
\begin{equation}\label{eq:exp2}
\begin{split}
\sum_{|\beta|=\mu}c_{\beta}(x^{(1)})^{\beta_{1}}\ast\cdots\ast (x^{(s)})^{\beta_{s}}&=\sum_{t=1}^{r}\sum_{\alpha\in I_{\mu,t}} d_\alpha P^\alpha  + \sum_{t>r}\sum_{\alpha\in I_{\mu,t}} d_{\alpha} P^{\alpha}.
\end{split}
\end{equation}
From \ref{Ds2}, \ref{Ds4} we thus obtain that
\[
\Big\|T^{a_r}\Big(\sum_{|\beta|=\mu}c_{\beta}(x^{(1)})^{\beta_{1}}\ast\cdots\ast (x^{(s)})^{\beta_{s}}\Big)\Big\|_r < \sum_{t>r}\text{card} (I_{\mu,t})C_{\mu} t^{\mu} 2^{-t}.
\] 
Altogether we have that
\[
\|T^{a_r}z-\rho y^{(l)}\|_r < \rho 2^{-r}+\sum_{\mu=1}^m\sum_{t>r}\text{card} (I_{\mu,t})C_{\mu} t^{\mu} 2^{-t}.
\]
Since 
\[
\text{card}(I_{\mu,t}) \leq \tbinom{\mu+t-1}{\mu}\leq \tfrac{(\mu+t)^\mu}{\mu!}
\]
the above series converge. Thus, for any $N\geq 1$ and $\varepsilon>0$ we can find an $r\geq N$ such that
\[
\|T^{a_r}z-\rho y^{(l)}\|_N <\varepsilon.
\]
Since the sequence $(\rho y^{(l)})_l$ is dense in $X$ we deduce that $z$ is hypercyclic for $T$.

The choice of the $\lambda_{k,\nu}$ also ensures that $\mathcal{A}$ is not finitely generated. Indeed, if it were finitely generated, we would have that, for some $s\geq 1$,
\[
x^{(s+1)}=\sum_{\substack{\beta\in I \subset \mathbb{N}_0^s\\ \beta\neq 0}}c_{\beta}(x^{(1)})^{\beta_{1}}\ast\cdots\ast (x^{(s)})^{\beta_{s}}
\]
with complex numbers $c_\beta$, where $I$ is a finite set. Let again $m= \max\{|\beta| : c_\beta\neq 0\}$. As above we can then find some $\nu\geq 1$ such that
\[
\sum_{|\beta|=m}c_{\beta}\lambda_{1,\nu}^{\beta_{1}}\cdots \lambda_{s,\nu}^{\beta_{s}}=:\rho\neq 0.
\]
In view of \eqref{eq:exp} and \eqref{eq:exp2}, we can then write
\[
x^{(s+1)}=\sum_{\substack{1\leq\mu\leq m\\1\leq t\leq r\\(\mu,t)\neq (m,r)}}\sum_{\alpha\in I_{\mu,t}} d_\alpha P^\alpha + \sum_{\substack{\alpha\in I_{m,r}\\\alpha\neq (0,\ldots,0,m)}} d_{\alpha} P^{\alpha}+ \rho p_r^m + \sum_{\mu=1}^m\sum_{t>r}\sum_{\alpha\in I_{\mu,t}} d_{\alpha} P^{\alpha},
\]
where $r=(m,l,\nu)$; note that $l\geq 1$ can be chosen freely. On the right-hand side, the first two terms represent a sequence whose non-zero coordinates have index less than $a_r$ by \eqref{Ds2}, while the non-zero coordinates of the fourth term have index at least $\eta_{r+1}$. Since $a_r\leq m\gamma_r<\eta_{r+1}$, it follows that 
\[
x^{(s+1)}_{m\gamma_r}\neq 0.
\]
However, for the same reason and by the definition of $x^{(s+1)}$, we have that $x^{(s+1)}_{m\gamma_r} =0$ whenever $m\geq 2$.

Thus we must have that $m=1$. But then there are complex numbers $c_k$ such that
\[
x^{(s+1)} = \sum_{k=1}^s c_k x^{(k)} =\sum_{r=1}^\infty \Big(\sum_{k=1}^s c_k \lambda_{k,\nu_r}\Big) p_r.
\]
Now, by the choice of the matrix $\Lambda$ there is some $\nu\geq 1$ such that $\sum_{k=1}^s c_k \lambda_{k,\nu}-\lambda_{s+1,\nu}\neq 0$. This contradicts the fact that $x^{(s+1)}=\sum_{r=1}^\infty \lambda_{s+1,\nu_r} p_r$. 

Therefore $\mathcal{A}$ cannot be finitely generated.
\end{proof}

We spell out the two cases of greatest interest. In each case property B is verified, see Example \ref{seqalg}; note that $\|e_n\|_q=q^n$ in $H(\mathbb{C})$.

\begin{corollary}\label{corrolalgCP1}
Let $B_w$ be a mixing weighted backward shift on $\ell^1$, which we consider as a Banach sequence algebra under the Cauchy product. Then the set $HC(B_w)$ of hypercyclic vectors for $B_w$ is algebrable. This applies, in particular, to the Rolewicz operators $\lambda B$, $|\lambda|>1$.
\end{corollary}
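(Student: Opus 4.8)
The plan is to deduce this corollary directly from Theorem \ref{thrm:algebraseveralcauchy}, so the whole task reduces to verifying its two hypotheses for the space $X=\ell^1$: that $(e_n)_n$ is a basis with Property B (in the sense of Definition \ref{def:propertyB}), and that the shifts under consideration are mixing. Once both are in place, the theorem immediately gives that $HC(B_w)$ is algebrable. I would first recall from Example \ref{seqalg}(b) that $\ell^1$ is a Banach sequence algebra under the Cauchy product, with canonical Schauder basis $(e_n)_n$ satisfying $\|e_n\|=1$ for all $n\geq 0$.

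The heart of the verification is Property B, but it trivializes because $\ell^1$ carries a single norm and all basis vectors have that norm equal to $1$. Concretely, every seminorm $\|\cdot\|_q$ coincides with the $\ell^1$-norm, so condition (i) holds since $\|e_n\|=1>0$; condition (ii) holds with $C_1=1$ because $\|e_n\|\cdot\|e_k\|=1=\|e_{n+k}\|$; and condition (iii) holds with $C_2=1$ (and any choice of $\rho,\tau$) because both sides reduce to $1$, namely $\|e_{mn}\|\cdot\|e_{n-k}\|=1=\|e_{mn}\|^{1/m}\cdot\|e_{mn-k}\|$. Thus Property B is satisfied essentially for free, and Theorem \ref{thrm:algebraseveralcauchy} yields the first assertion for every mixing weighted backward shift on $\ell^1$.

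It remains to confirm the mixing hypothesis in the special case of the Rolewicz operators $\lambda B$ with $|\lambda|>1$. Here $\lambda B=B_w$ with the constant weight $w_n=\lambda$ for $n\geq 1$ (and $w_0=1$), so that $v_n=\prod_{k=0}^n w_k=\lambda^n$. The mixing criterion \eqref{eq:mix} requires $v_n^{-1}e_n\to 0$, and indeed $\|v_n^{-1}e_n\|=|\lambda|^{-n}\to 0$ since $|\lambda|>1$; hence $\lambda B$ is mixing on $\ell^1$ and the previous paragraph applies. I do not expect any genuine obstacle: the corollary is a direct specialization of the preceding theorem, and the only mildly intimidating hypothesis—condition (iii) of Property B—collapses precisely because every basis vector of $\ell^1$ has norm one. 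The single point requiring care is simply to record that the defining inequalities of Property B hold verbatim for the constant value $1$.
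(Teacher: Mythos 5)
Your proposal is correct and follows exactly the paper's route: the corollary is obtained by specializing Theorem \ref{thrm:algebraseveralcauchy} to $X=\ell^1$, where Property B holds trivially because $\|e_n\|=1$ for all $n$ (as the paper notes via Example \ref{seqalg}), and the mixing of $\lambda B$ for $|\lambda|>1$ follows from \eqref{eq:mix} since $|\lambda|^{-n}\to 0$. Your explicit verification of conditions (i)--(iii) with $C_1=C_2=1$ is exactly what the paper leaves implicit.
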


\begin{corollary}\label{corrolalgCP2}
Let $B_w$ be a mixing weighted backward shift on $H(\mathbb{C})$, which we consider as a Fr\'echet sequence algebra under the pointwise product of functions. Then the set $HC(B_w)$ of hypercyclic vectors for $B_w$ is algebrable. This applies, in particular, to the MacLane operator $D$ of differentiation.
\end{corollary}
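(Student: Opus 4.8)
The plan is to deduce this corollary directly from Theorem~\ref{thrm:algebraseveralcauchy}, so the work consists entirely in checking that its three hypotheses are met by $H(\mathbb{C})$ viewed through Taylor coefficients at $0$. First I would recall that the pointwise product $(fg)(z)=\sum_n(\sum_{k=0}^n a_k b_{n-k})z^n$ of $f(z)=\sum_n a_nz^n$ and $g(z)=\sum_n b_nz^n$ corresponds exactly to the Cauchy product of the coefficient sequences, so that $H(\mathbb{C})$ under pointwise multiplication is precisely the Fr\'echet sequence algebra of Example~\ref{seqalg}(a), with submultiplicative seminorms $\|f\|_q=\sup_{|z|\leq q}|f(z)|$ and $(e_n)_n$ a basis. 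The only genuinely computational point is Property~B; the identity $\|e_n\|_q=\sup_{|z|\leq q}|z^n|=q^n$ makes every condition explicit.

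For Property~B I would take $\|e_n\|_q=q^n$ and verify the three clauses of Definition~\ref{def:propertyB}. Clause (i) holds with $q=1$, since $\|e_n\|_1=1>0$. Clause (ii) holds with $q=r$ and $C_1=1$, because $\|e_n\|_r\|e_k\|_r=r^{n+k}=\|e_{n+k}\|_r$. The substantive clause is (iii): given $m\geq2$, $M\geq1$, $r\geq1$, I would simply set $\rho=r$; then for any $t\geq1$ the required inequality
\[
\|e_{mn}\|_t\,\|e_{n-k}\|_r = t^{mn}r^{n-k} \leq C_2\,\tau^{n}\,r^{mn-k} = C_2\,\|e_{mn}\|_\tau^{1/m}\,\|e_{mn-k}\|_\rho
\]
reduces, after cancelling $r^{-k}$ on both sides, to $(t^mr)^n\leq C_2(\tau r^m)^n$, which holds with $C_2=1$ as soon as $\tau\geq t^m/r^{m-1}$. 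Choosing such an integer $\tau$ for each $t$ completes (iii). I expect this clause to be the only step requiring care, precisely because its nested quantifiers force $\rho$ to be fixed before $t$ is seen; the explicit form $q^n$ makes this harmless, since the choice $\rho=r$ eliminates the $k$-dependence entirely and $\tau$ alone can then be inflated to dominate $t^m$.

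Finally I would identify the operators. Writing $f(z)=\sum_n a_nz^n$, differentiation gives $(Df)(z)=\sum_n(n+1)a_{n+1}z^n$, so $D=B_w$ is the weighted backward shift with $w_n=n$ (and $w_0=1$), whence $v_n=\prod_{k=0}^n w_k=n!$. The mixing criterion \eqref{eq:mix} then holds because $\|v_n^{-1}e_n\|_q=q^n/n!\to0$ as $n\to\infty$ for every $q$, so $D$ is a mixing weighted backward shift on $H(\mathbb{C})$. With the Fr\'echet sequence algebra structure, Property~B, and mixing all in place, Theorem~\ref{thrm:algebraseveralcauchy} applies and yields that $HC(B_w)$ for any mixing weighted backward shift $B_w$ on $H(\mathbb{C})$, and in particular $HC(D)$, contains a non-finitely-generated algebra except zero; that is, $HC(B_w)$ is algebrable.
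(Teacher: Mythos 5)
Your proposal is correct and follows the paper's own route: the paper likewise deduces the corollary from Theorem~\ref{thrm:algebraseveralcauchy} by noting that $\|e_n\|_q=q^n$ in $H(\mathbb{C})$ and that Property~B is then verified, exactly as in your explicit computation (your choices $\rho=r$, $\tau\geq t^m/r^{m-1}$ for clause (iii) are the intended ones, and your identification of $D$ as $B_w$ with $w_n=n$, $v_n=n!$, mixing via $q^n/n!\to 0$, matches the standard facts the paper invokes). You merely spell out details the paper leaves to the reader.
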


Finally, any weighted backward shift $B_w$ on $\omega$ satisfies \eqref{eq:mix}. Thus, in view of Remark \ref{remomega}, we have the following.

\begin{corollary}\label{corrolalgCP3}
For any weighted backward shift $B_w$ on $\omega$, considered as a Fr\'echet sequence algebra under the Cauchy product, the set $HC(B_w)$ of hypercyclic vectors for $B_w$ is algebrable. 
\end{corollary}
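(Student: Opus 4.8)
The plan is to reduce the statement to Theorem \ref{thrm:algebraseveralcauchy} via Remark \ref{remomega}. The only genuine issue is that $\omega$ fails Property B (condition (i) of Definition \ref{def:propertyB} breaks down, since $\|e_n\|_q=0$ for $n>q$), so the theorem does not apply verbatim. However, the proof of that theorem draws on Property B only through its single application of Lemma \ref{buildingblocks}, and I would argue that the \emph{conclusion} of that lemma holds unconditionally on $\omega$; everything else carries over unchanged.

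First I would record that every weighted backward shift $B_w$ on $\omega$ is mixing. In the product topology a single basis vector $v_n^{-1}e_n$ has $j$-th coordinate zero as soon as $n>j$, so $v_n^{-1}e_n\to 0$ coordinatewise for any weight; this is exactly \eqref{eq:mix}. Thus the mixing hypothesis of Theorem \ref{thrm:algebraseveralcauchy} is automatic here, and the only missing ingredient is Property B.

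Next I would establish the conclusion of Lemma \ref{buildingblocks} directly on $\omega$, as indicated in Remark \ref{remomega}. Given $y=\sum_{j=0}^s y_je_j$ and parameters $m,r,N,\varepsilon$, I would pick $\eta\ge\max\{N,r+1\}$ and then $\gamma>\eta+\max\{2s,r\}$, so that every index occurring in $p=q+be_\gamma$ (namely $\eta,\dots,\eta+s$ and $\gamma$) and the single index $\gamma-\eta$ occurring in $B_w^{\eta+(m-1)\gamma}(b^m e_{m\gamma})$ all exceed $r$. Since the Cauchy-product seminorm on $\omega$ is $\|\cdot\|_r=\sum_{n=0}^r|\cdot|$ and therefore ignores all coordinates of index $>r$, conditions \ref{cauchycond2} and \ref{cauchycond4} then hold with both seminorms equal to $0$, for \emph{any} choice of $b\neq 0$ (take $b=1$). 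Condition \ref{cauchycond3} is the purely algebraic identity that fixes the coefficients $c_0,\dots,c_s$ through \eqref{eq:alph}, so it is unaffected. This yields the full conclusion of Lemma \ref{buildingblocks} on $\omega$ with no appeal to Property B.

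Finally I would run the proof of Theorem \ref{thrm:algebraseveralcauchy} verbatim, with this trivial version of Lemma \ref{buildingblocks} substituted at its one point of use; the same construction then produces points $x^{(k)}$, $k\ge 1$, generating a non-finitely-generated subalgebra of $HC(B_w)\cup\{0\}$. The one thing I would double-check — and I expect it to be the only subtlety — is precisely that no hidden use of Property B survives elsewhere in that proof, for instance in the estimates controlling $\|T^{a_r}P^\alpha\|_r$. But those estimates rely solely on the continuity of $T$ and of the Cauchy product on $\omega$, which hold regardless, so the argument goes through and $HC(B_w)$ is algebrable.
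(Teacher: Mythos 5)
Your proposal is correct and takes essentially the same route as the paper: the authors deduce Corollary \ref{corrolalgCP3} from Theorem \ref{thrm:algebraseveralcauchy} via Remark \ref{remomega}, noting that every weighted shift on $\omega$ satisfies \eqref{eq:mix} and that the conclusion of Lemma \ref{buildingblocks} holds trivially on $\omega$ because the seminorms annihilate all coordinates of sufficiently high index, while the rest of the proof uses only continuity of $T$ and of the Cauchy product. Your explicit choices of $\eta$ and $\gamma$ merely make that observation concrete.
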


\end{document}